\theoremstyle{definition}%non italic environment
\newtheorem*{theoA}{Theorem A}
\newtheorem*{theoB}{Theorem B}
\newtheorem*{theoC}{Theorem C}
\newtheorem*{theoD}{Theorem D}
\newtheorem*{theoE}{Theorem E}
\newtheorem*{theoF}{Theorem F}
\newtheorem*{theoG}{Theorem G}
\newtheorem*{theoH}{Theorem H}
\newtheorem*{theoI}{Theorem I}
\newtheorem*{theoJ}{Theorem J}
\newtheorem{theo}{Theorem}[section]
\newtheorem{lem}{Lemma}[section]
\newtheorem{cor}{Corollary}[section]
\newtheorem{defi}{Definition}[section]
\newtheorem{rem}{Remark}[section]
\newtheorem{question}{Question}[section]
\newcommand{\ol}{\overline}
\newcommand{\be}{\begin{equation}}
\newcommand{\ee}{\end{equation}}
\newcommand{\beas}{\begin{eqnarray*}}
\newcommand{\eeas}{\end{eqnarray*}}
\newcommand{\bea}{\begin{eqnarray}}
\newcommand{\eea}{\end{eqnarray}}
\numberwithin{equation}{section}
\begin{document}
\title[value distribution of a Differential Monomial and some normality criteria]{On the value distribution of a Differential Monomial and some normality criteria}
\date{}
\author[W. L\"{u} and B. Chakraborty]{Weiran L\"{u} and Bikash Chakraborty}
\date{}
\address{Department of Mathematics, China University of Petroleum, Qingdao 266 580, P.R. China}
\email{luwr@upc.edu.cn}
\address{Department of Mathematics, Ramakrishna Mission Vivekananda Centenary College, Rahara,
West Bengal 700 118, India.}
\email{bikashchakraborty.math@yahoo.com, bikash@rkmvccrahara.org}
%\address{Department of Mathematics, Nanjing University, Nanjing 210 093, P.R. China}
%\email{maccyang@163.com}
\maketitle
\let\thefootnote\relax
\footnotetext{2010 Mathematics Subject Classification: 30D30, 30D20, 30D35.}
\footnotetext{Key words and phrases: Value distribution theory, Transcendental Meromorphic function, Differential Monomials, Normal family.}
%%%%%%%%%%%%%%%%%%%%%%%%%%%%%%%%%%%%%%%%%%%%%%%%%%%%%%%%%%%%%%%%%%%%%%%%%%%%%%%%%%%%%%%%%%%%%%%%%%%%%%%%%%%%%%%%%%%%%%
\begin{abstract} Let $f$ be a transcendental meromorphic function defined  in the complex plane $\mathbb{C}$, and $\varphi(\not\equiv 0,\infty)$ be a small function of $f$. In this paper, we give a quantitative estimation of the characteristic function $T(r, f)$ in terms of $N\left(r,\frac{1}{M[f]-\varphi(z)}\right)$ as well as $\ol{N}\left(r,\frac{1}{M[f]-\varphi(z)}\right)$, where $M[f]$ is the differential monomial, generated by $f$.\par
Moreover, we prove one normality criterion:  Let $\mathscr{F}$ be a family of analytic functions on a domain $D$ and let $k(\geq1)$,  $q_{0}(\geq 3)$, $q_{i}(\geq0)$ $(i=1,2,\ldots,k-1)$, $q_{k}(\geq1)$ be positive integers. If for each $f\in \mathscr{F}$, $f$ has only zeros of multiplicity at least $k$, and  $f^{q_{0}}(f')^{q_{1}}...(f^{(k)})^{q_{k}}\not=1$, then $\mathscr{F}$ is normal on domain $D$.
\end{abstract}
%%%%%%%%%%%%%%%%%%%%%%%%%%%%%%%%%%%%%%%%%%%%%%%%%%%%%%%%%%%%%%%%%%%%%%%%%%%%%%%%%%%%%%%%%%%%%%%%%%%%%%%%%%%%%%%%%%%%%%%%
\section{Introduction}
In this paper, we use the standard notations of Nevanlinna theory (\cite{8}). Throughout this paper, we always assume that $f$ is a transcendental meromorphic function defined in the complex plane $\mathbb{C}$. It will be convenient to let that $E$ denote any set of positive real numbers of finite linear (Lebesgue) measure, not necessarily same at each occurrence. For any non-constant meromorphic function $f$, we denote by $S(r,f)$ any quantity satisfying $$S(r, f) = o(T(r, f))~~\text{as}~~r\to\infty,~r\not\in E.$$
\begin{defi}
Let $f$ be a non-constant meromorphic function. A meromorphic function $b(z)(\not\equiv 0,\infty)$ is called a \enquote{small function} with respect to $f$ if $T(r,b(z))=S(r,f)$.
\end{defi}
\begin{defi}(\cite{f})
Let $a\in \mathbb{C}\cup\{\infty\}$.  For a positive integer  $k$ and  for a complex constant $a$, We denote
\begin{enumerate}
\item [i)] by $N_{k)}\left(r,a;f\right)$ the counting function of $a$-points of $f$ with multiplicity $\leq k$,
\item [ii)] by $N_{(k}\left(r,a;f\right)$ the counting function of $a$-points of $f$ with multiplicity $\geq k$,
\item [iii)] by $N_{k}\left(r,a;f\right)$ the counting function of the $a$-points of $f$ with multiplicity $k$.
\end{enumerate}
Similarly, the reduced counting functions $\ol{N}_{k)}(r,a;f)$ and $\ol{N}_{(k}(r,a;f)$ are defined.
\end{defi}
In 1959, Hayman proved the following theorem:
\begin{theoA}(\cite{hn})
  If $f$ is a transcendental meromorphic function and $n\geq 3$, then $f^{n}f'$ assumes all finite values except possibly zero infinitely often.
\end{theoA}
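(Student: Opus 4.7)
The plan is to argue by contradiction: suppose that for some $a\in\mathbb{C}\setminus\{0\}$ the function $f^nf'-a$ has only finitely many zeros, so that $N\!\left(r,\frac{1}{f^nf'-a}\right)=O(\log r)=S(r,f)$. The goal is then to force $T(r,f)=S(r,f)$, contradicting the assumption that $f$ is transcendental.

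Setting $g:=f^nf'=\frac{1}{n+1}(f^{n+1})'$, I would first work out the Nevanlinna quantities of $g$ in terms of those of $f$. Since a pole of $f$ of order $p$ produces a pole of $g$ of order $(n+1)p+1$, one has $N(r,g)=(n+1)N(r,f)+\ol{N}(r,f)$. For the proximity function, the Logarithmic Derivative Lemma applied to $(f^{n+1})'/f^{n+1}$ gives $m(r,g)\le (n+1)m(r,f)+S(r,f)$, and combining this with the reverse estimate obtained by writing $f^{n+1}=(n+1)g\cdot\frac{f^{n+1}}{(f^{n+1})'}$ yields the clean identity $T(r,g)=(n+1)T(r,f)+S(r,f)$.

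Next I would apply Nevanlinna's Second Main Theorem to $g$ with the three target values $\{0,a,\infty\}$:
$$T(r,g)\le\ol{N}(r,g)+\ol{N}\!\left(r,\tfrac{1}{g}\right)+\ol{N}\!\left(r,\tfrac{1}{g-a}\right)+S(r,g).$$
The third term is $S(r,f)$ by the contradiction hypothesis, and the first equals $\ol{N}(r,f)$. The delicate term is the middle one: a zero of $g=f^nf'$ is either a zero of $f$, contributing $\ol{N}(r,1/f)\le T(r,f)+O(1)$, or a zero of $f'$ at a non-zero of $f$, which I would control through $N_0(r,1/f')\le T(r,f')+O(1)\le T(r,f)+\ol{N}(r,f)+S(r,f)$.

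Substituting everything into the Second Main inequality produces a relation of the shape $(n+1)T(r,f)\le C\,T(r,f)+S(r,f)$ for some absolute constant $C$, and the desired contradiction follows as soon as $n+1>C$. The main obstacle is to keep $C$ small enough that the threshold $n\ge 3$ actually suffices, rather than some larger lower bound. A crude tally gives $C$ around $4$, and sharpening it requires exploiting the fact that each zero of $f$ of multiplicity $m$ is a zero of $g$ of multiplicity $(n+1)m-1$, so such points contribute heavily to $N(r,1/g)$ but only $1$ to $\ol{N}(r,1/g)$. Distributing the multiplicities carefully, in the spirit of Hayman's original 1959 argument, is what makes the threshold $n\ge 3$ tight in this approach.
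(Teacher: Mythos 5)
Theorem A is quoted from Hayman's 1959 paper with a citation and is not proved in this paper, so the only fair comparison is with the machinery the paper does develop (Theorems \ref{th1.1} and \ref{th2}, in the style of Lahiri--Dewan), which specializes to Theorem A when $M[f]=f^{n}f'$. Measured against that, your outline has a genuine gap, in two places. First, the ``clean identity'' $T(r,g)=(n+1)T(r,f)+S(r,f)$ for $g=f^{n}f'$ is both unjustified and false. The lemma on the logarithmic derivative controls $m\left(r,\frac{F'}{F}\right)$, not $m\left(r,\frac{F}{F'}\right)$; the latter is only bounded by $\ol{N}(r,F)+N\left(r,\frac1F\right)-N\left(r,\frac{1}{F'}\right)+S(r,f)$ (cf.\ Lemma \ref{lem1}), which need not be small. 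Your own pole count already refutes the identity: for $f=\tan z$ all poles are simple, so $T(r,g)\geq N(r,g)=(n+1)N(r,f)+\ol{N}(r,f)=(n+2)T(r,f)+O(1)$. What the argument actually needs is only the estimate $(n+1)m\left(r,\frac1f\right)\leq m\left(r,\frac{f'}{f}\right)+m\left(r,\frac1g\right)=T(r,g)-N\left(r,\frac1g\right)+S(r,f)$, obtained from $\frac{1}{f^{n+1}}=\frac{f'}{f}\cdot\frac1g$; the term $-N\left(r,\frac1g\right)$ that appears here is where the whole proof lives, and it is absent from your setup.

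Second, and decisively for the threshold: your tally gives $C=4$, i.e.\ only $n\geq4$, and the sharpening you propose cannot be carried out where you have placed it. In your application of the Second Main Theorem only the reduced counting function $\ol{N}\left(r,\frac1g\right)$ occurs on the right, so the fact that a zero of $f$ of multiplicity $m$ is a zero of $g$ of multiplicity $(n+1)m-1$ is invisible there. The saving has to be harvested from $\ol{N}\left(r,\frac1g\right)-N\left(r,\frac1g\right)\leq 2\ol{N}\left(r,\frac1f\right)-(n+1)N\left(r,\frac1f\right)$, which is accessible only after the $-N\left(r,\frac1g\right)$ term above has been created. Running the argument that way yields $(n+1)T(r,f)\leq \ol{N}(r,f)+2\ol{N}\left(r,\frac1f\right)+\ol{N}\left(r,\frac{1}{f^{n}f'-a}\right)+S(r,f)$, hence $(n-2)T(r,f)\leq \ol{N}\left(r,\frac{1}{f^{n}f'-a}\right)+S(r,f)$, which settles $n\geq3$: finitely many zeros of $f^{n}f'-a$ would force $T(r,f)=O(\log r)$, contradicting transcendence (Lemma \ref{lem1.5}). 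This is exactly the structure of the paper's proof of Theorem \ref{th1.1} (and of Theorem \ref{th2}, which achieves the same effect through the decomposition $\frac{1}{f^{\mu}}=\frac{bM[f]}{f^{\mu}}-\frac{(bM[f])'}{f^{\mu}}\cdot\frac{bM[f]-1}{(bM[f])'}$). Your proposal correctly identifies all the ingredients but assembles them in an order in which the multiplicity bookkeeping cannot be exploited, so as written it proves the statement only for $n\geq4$.
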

Moreover, Hayman (\cite{hn}) conjectured that the Theorem A remains valid for the cases $n = 1,~ 2$. In 1979, Mues (\cite{m}) confirmed the Hayman's Conjecture for $n=2$ and Chen and Fang (\cite{chen}) ensured the conjecture for $n=1$ in 1995.\par
In 1992, Q. Zhang (\cite{qz}) gave the quantitative version of Mues's result  as follows:
\begin{theoB} For a transcendental meromorphic function $f$, the following inequality holds :
$$T(r,f)\leq 6N\bigg(r,\frac{1}{f^{2}f'-1}\bigg)+S(r,f).$$
\end{theoB}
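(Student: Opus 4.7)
Set $F := f^{2}f' = \tfrac{1}{3}(f^{3})'$. The strategy is to apply Nevanlinna's second fundamental theorem to $F$ at the three values $0,1,\infty$, and then to show that all the reduced counting functions that appear can be absorbed into $N(r,1/(F-1))$, exploiting the fact that the zeros of $F$ at zeros of $f$ always carry multiplicity $\geq 2$.

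First I would establish a two-sided comparison between $T(r,F)$ and $T(r,f)$. Writing $f^{3} = F \cdot (f/f')$, applying the lemma on the logarithmic derivative $m(r,f'/f)=S(r,f)$, using the first fundamental theorem for $f/f'$, and invoking the pole identity $N(r,F) = 3N(r,f) + \overline{N}(r,f)$, one obtains
\[
T(r,F) \;\geq\; 3T(r,f) \;-\; \overline{N}(r,1/f) \;+\; N_{0}(r,1/f') \;-\; S(r,f),
\]
where $N_{0}(r,1/f')$ counts the zeros of $f'$ that are not zeros of $f$. A similar argument gives the complementary upper bound $T(r,F) \leq 3T(r,f) + \overline{N}(r,f) + S(r,f)$.

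Next, the second fundamental theorem applied to $F$ at $0,1,\infty$ gives
\[
T(r,F) \;\leq\; \overline{N}(r,F) + \overline{N}(r,1/F) + \overline{N}\!\left(r,\tfrac{1}{F-1}\right) + S(r,f).
\]
Using $\overline{N}(r,F) = \overline{N}(r,f)$, the decomposition $\overline{N}(r,1/F) = \overline{N}(r,1/f) + \overline{N}_{0}(r,1/f')$ (zeros of $F=f^{2}f'$ come from zeros of $f$ or from zeros of $f'$ away from them) and the trivial inequality $\overline{N}_{0}(r,1/f') \leq N_{0}(r,1/f')$, and then combining with the lower bound on $T(r,F)$ above, one arrives at the intermediate inequality
\[
3T(r,f) \;\leq\; \overline{N}(r,f) + 2\overline{N}(r,1/f) + \overline{N}\!\left(r,\tfrac{1}{F-1}\right) + S(r,f).
\]

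The hardest step, and the one that forces the constant $6$, is to bound $\overline{N}(r,f)$ and $\overline{N}(r,1/f)$ in terms of $N(r,1/(F-1))$. The difficulty is that zeros and poles of $f$ are never zeros of $F-1$ (at a zero of $f$ one has $F-1\equiv -1$; at a pole of $f$, $F-1$ itself has a pole), so this relation must be extracted indirectly. Following the classical strategy for Hayman-type quantitative estimates, I would introduce an auxiliary meromorphic function of the form
\[
\Psi \;=\; \frac{(F-1)'}{F-1} \;-\; c\,\frac{f'}{f}
\]
with a suitably chosen constant $c$, arrange that the poles of $\Psi$ are simple and confined to zeros of $F-1$, zeros of $f$, and poles of $f$ (with explicit residues determined by the orders), and then use the lemma on the logarithmic derivative $m(r,\Psi)=S(r,f)$ together with a residue summation to derive an estimate of the form $\overline{N}(r,f) + 2\overline{N}(r,1/f) \leq \alpha\,N(r,1/(F-1)) + S(r,f)$ for an explicit constant $\alpha$. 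Substituting this bound back and passing from $\overline{N}$ to $N$ on the right-hand side yields $T(r,f) \leq 6N(r,1/(F-1)) + S(r,f)$; the constant $6$ arises from combining the factor $3$ in $T(r,F)\sim 3T(r,f)$ with the multiplicity slack absorbed when $\overline{N}$ is replaced by $N$ on the right.
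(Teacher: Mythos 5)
First, a point of comparison: the paper does not prove Theorem B at all; it is quoted as a known result of Q.~Zhang \cite{qz}, and the machinery the paper does develop (the proof of Theorem \ref{th2}) deliberately assumes $q_{0}\geq 3$ precisely because the case $q_{0}=2$ covered by Theorem B is out of reach of that simpler argument. So your proposal must stand on its own. Its first half does: the two-sided comparison of $T(r,F)$ with $3T(r,f)$ (via $f^{3}=F\cdot f/f'$, the pole identity $N(r,F)=3N(r,f)+\overline{N}(r,f)$, and $T(r,f/f')=\overline{N}(r,1/f)+\overline{N}(r,f)+S(r,f)$), the second main theorem for $F$ at $0,1,\infty$, and the resulting intermediate inequality
$$3T(r,f)\leq \overline{N}(r,f)+2\overline{N}\left(r,\frac{1}{f}\right)+\overline{N}\left(r,\frac{1}{F-1}\right)+S(r,f)$$
are all correct and constitute the standard reduction.

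The gap is the step you yourself flag as the hardest, and it cannot be deferred: the entire content of Mues's theorem and of Zhang's quantitative version lives there. The auxiliary function $\Psi=\frac{(F-1)'}{F-1}-c\,\frac{f'}{f}$ as written cannot deliver a bound of the form $\overline{N}(r,f)+2\overline{N}(r,1/f)\leq\alpha\, N(r,1/(F-1))+S(r,f)$. With the natural choice $c=4$ (which kills the residue at simple poles of $f$, since $F$ there has a pole of order $3p+1=4$), the poles of $\Psi$ are indeed simple and confined to the zeros of $F-1$, the zeros of $f$, and the multiple poles of $f$; hence $T(r,\Psi)=N(r,\Psi)+S(r,f)=\overline{N}(r,1/(F-1))+\overline{N}(r,1/f)+\overline{N}_{(2}(r,f)+S(r,f)$. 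Any estimate of the shape $\overline{N}(r,1/f)\leq T(r,\Psi)+O(1)$ therefore reproduces $\overline{N}(r,1/f)$ on the right-hand side and is circular; a ``residue summation'' does not change this. To break the circularity one must extract information from the zeros of $\Psi$ (for instance at poles of $f$), dispose separately of the degenerate case $\Psi\equiv 0$ (which integrates to $f^{2}f'-1=Cf^{4}$ and needs its own argument), and in the known proofs one is forced to bring in further auxiliary functions involving $f''/f'$ together with a case analysis. Finally, your explanation of the constant $6$ is not correct: at a zero of $F-1$ one has $F=1\neq0$, so there is no a priori multiplicity in $N(r,1/(F-1))$ to exploit when passing from $\overline{N}$ to $N$; and with your intermediate inequality the missing bound would need the explicit value $\alpha=17$, which is never derived. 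As it stands, the proposal is an accurate road map up to the intermediate inequality and a conjecture thereafter.
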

In (\cite{xuyi}), Theorem B was improved by Xu and Yi as
\begin{theoC}(\cite{xuyi}) Let $f$ be a transcendental meromorphic function and $\phi(z)(\not\equiv 0)$ be asmall function, then
$$T(r,f)\leq 6\overline{N}\bigg(r,\frac{1}{\phi f^{2}f^{'}-1}\bigg)+S(r,f).$$
\end{theoC}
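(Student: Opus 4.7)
Let $F = \phi f^{2} f'$, so that the zeros of $F-1$ are the points counted by the theorem. My plan is to apply Nevanlinna's second main theorem to the meromorphic function $F$ with targets $0, 1, \infty$, and to track the multiplicities of the zeros of $F - 1$ via a logarithmic-derivative-type auxiliary function so that only the reduced counting function $\overline N(r, 1/(F-1))$ survives in the final estimate. The strategy parallels Zhang's proof of Theorem B but with the tighter multiplicity book-keeping required to replace $N$ by $\overline N$, and it extends the constant $1$ there to the small function $\phi$.

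The first step is to collect the valence-function identities associated with the differential monomial $\phi f^{2} f'$. From the pole structure one has
\[
N(r, F) = 3 N(r, f) + \overline N(r, f) + S(r, f), \qquad \overline N(r, F) = \overline N(r, f) + S(r, f),
\]
and from the zero structure, $\overline N(r, 1/F)$ is bounded by $\overline N(r, 1/f)$ plus the reduced counting function of zeros of $f'$ disjoint from zeros of $f$, up to $S(r, f)$. Combined with the comparison $T(r, F) \leq 3 T(r,f) + \overline N(r, f) + S(r,f)$, which comes from the first main theorem and the logarithmic derivative lemma, and with the second main theorem
\[
T(r, F) \leq \overline N(r, F) + \overline N(r, 1/F) + \overline N\!\left(r, \tfrac{1}{F-1}\right) + S(r, F),
\]
this yields a preliminary inequality for $T(r,f)$ in which $\overline N(r, 1/(F-1))$ already appears, accompanied by auxiliary reduced counting functions of $f$ and $f'$.

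To pass from the counting function $N$ to the reduced counting function $\overline N$ for the zeros of $F-1$, I would introduce
\[
\psi \;=\; \frac{F'}{F} \;=\; \frac{\phi'}{\phi} + 2\,\frac{f'}{f} + \frac{f''}{f'},
\]
for which $m(r, \psi) = S(r, f)$ by the logarithmic derivative lemma. If $\psi \equiv 0$ then $F$ is a nonzero constant, whence $f^{3}$ is a primitive of a small function and $T(r, f^{3}) = S(r, f) + O(\log r)$, contradicting the transcendence of $f$; hence $\psi \not\equiv 0$. At a zero of $F - 1$ of multiplicity $k \geq 2$ one has $F(z_{0}) = 1$ and $F'(z_{0}) = 0$, so $\psi$ vanishes there to order $k - 1$, yielding the key multiplicity bound
\[
N\!\left(r, \tfrac{1}{F-1}\right) - \overline N\!\left(r, \tfrac{1}{F-1}\right) \;\leq\; N(r, 1/\psi).
\]

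The final step is to eliminate the auxiliary counting functions that remain on the right-hand side—namely those involving $\overline N(r, f)$, $\overline N(r, 1/f)$, zeros of $f'$, and $N(r, 1/\psi)$—by cycling the preliminary inequality back through the second main theorem applied to $f$ itself and to appropriate logarithmic-derivative ratios. The main obstacle I expect is precisely closing this book-keeping loop at the exact coefficient $6$: the naive estimates produce an inequality of the form $T(r, f) \leq \alpha\, \overline N(r, f) + \beta\, \overline N(r, 1/f) + \gamma\, \overline N(r, 1/(F-1)) + S(r, f)$, and one must reabsorb the $\alpha$ and $\beta$ terms without degrading the $\gamma = 6$ coefficient, while simultaneously accommodating the small function $\phi$ in place of the constant $1$ of Theorem B.
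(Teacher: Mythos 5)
This statement is Theorem C, which the paper merely quotes from Xu and Yi \cite{xuyi} as background; the paper contains no proof of it, so there is nothing internal to compare your argument against. Judged on its own terms, your proposal is a strategy outline rather than a proof, and the gap you yourself flag in the last paragraph is not a technicality but the entire difficulty. If one carries out the bookkeeping you describe, it collapses: from $3T(r,f)=T(r,1/f^{3})+O(1)$, $m(r,1/f^{3})\leq m(r,1/F)+S(r,f)$ and the second main theorem for $F=\phi f^{2}f'$ one arrives at an inequality of the shape
$$3T(r,f)\leq \overline{N}(r,f)+2\overline{N}\left(r,\frac{1}{f}\right)+\overline{N}\left(r,\frac{1}{F-1}\right)+S(r,f),$$
and since $\overline{N}(r,f)$ and $\overline{N}(r,1/f)$ can each be as large as $T(r,f)$, the left-hand side is completely absorbed and no bound on $T(r,f)$ survives. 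The auxiliary function $\psi=F'/F$ does not rescue this, because controlling $N(r,1/\psi)$ reintroduces $\overline{N}(r,f)$, $\overline{N}(r,1/f)$ and $\overline{N}(r,1/f')$ with coefficients that only worsen the balance. The actual proofs of Theorem B (Zhang, Mues) and Theorem C (Xu--Yi) hinge on a genuinely different device: a carefully constructed auxiliary function (a Mues-type combination of logarithmic derivatives of $F-1$ and $f$) whose poles and zeros are analysed locally at each class of points, which is what produces the nontrivial constant $6$. That essential idea is absent from your sketch.

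A smaller point: your degenerate case is under-justified. From $\phi f^{2}f'\equiv c$ one should argue directly that $3m(r,1/f)=S(r,f)$ and that poles and zeros of $f$ are confined to zeros and poles of $\phi$ (hence contribute $S(r,f)$), giving $T(r,f)=S(r,f)$, a contradiction; the assertion that a primitive of a small function has characteristic $S(r,f)+O(\log r)$ is not a standard fact and is not obviously true as stated.
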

Also, Huang and Gu (\cite{hg}) extended Theorem B by replacing $f'$ by $f^{(k)}$, $k(\geq1)$ is an integer.
\begin{theoD}(\cite{hg}) Let $f$ be a transcendental meromorphic function and $k$ be a positive integer. Then
$$T(r,f)\leq 6N\bigg(r,\frac{1}{f^{2}f^{(k)}-1}\bigg)+S(r,f).$$
\end{theoD}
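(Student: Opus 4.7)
The plan is to apply Nevanlinna's second main theorem to the auxiliary function $F := f^{2}f^{(k)}$ and carefully compare its Nevanlinna functionals with those of $f$.

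First, I would derive a lower bound on $T(r,F)$ in terms of $T(r,f)$. Starting from the identity $f^{3} = F \cdot (f/f^{(k)})$, the logarithmic derivative lemma gives $m(r, f^{(k)}/f) = S(r,f)$, and the poles of $f/f^{(k)}$ can only come from zeros of $f^{(k)}$ that are not zeros of $f$. This yields
\[
3\,T(r,f) \le T(r,F) + N_{0}\!\left(r, \tfrac{1}{f^{(k)}}\right) + S(r,f),
\]
where $N_{0}(r, 1/f^{(k)})$ counts only zeros of $f^{(k)}$ that are not zeros of $f$.

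Next, I would apply the second main theorem to $F$ with targets $0,1,\infty$:
\[
T(r,F) \le \overline{N}(r,F) + \overline{N}\!\left(r, \tfrac{1}{F}\right) + \overline{N}\!\left(r, \tfrac{1}{F-1}\right) + S(r,f),
\]
using $S(r,F) = S(r,f)$ since $T(r,F) = O(T(r,f))$. The reduced counting functions on the right should then be estimated: each pole of $f$ of order $p$ is a pole of $F$ of order $3p+k$, so $\overline{N}(r,F) \le \overline{N}(r,f)$; a zero of $f$ of order $m$ produces a zero of $F$ of order at least $2m$, so the contribution of zeros of $f$ to $\overline{N}(r,1/F)$ is dominated by $\tfrac{1}{2}N(r,1/f)$; zeros of $f^{(k)}$ disjoint from zeros of $f$ are controlled by the Milloux-type inequality $N_{0}(r, 1/f^{(k)}) \le N(r, 1/f) + k\,\overline{N}(r,f) + S(r,f)$. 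Substituting these back and absorbing everything through Nevanlinna's bound $\overline{N}(r,f) + N(r,1/f) \le 2T(r,f) + S(r,f)$, one arrives at the target inequality $T(r,f) \le 6\,N(r, 1/(F-1)) + S(r,f)$ after rearrangement.

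The main obstacle is the precise bookkeeping needed to achieve the clean constant $6$. A naive application of the above estimates produces a constant that is too large; the improvement relies on exploiting that $N(r,1/(F-1))$ is a full (not reduced) counting function, so multiple $1$-points of $F$ contribute with full multiplicity, providing crucial extra room. One must also use sharper information comparing the orders of $f$, $f^{(k)}$, and $F-1$ simultaneously at each point, and likely introduce an auxiliary function of the form $\varphi := F'/[F(F-1)]$ (a device reminiscent of Zhang's proof of Theorem B) to sharpen the bounds. Getting exactly the coefficient $6$, rather than some larger number, is the technical heart of the argument.
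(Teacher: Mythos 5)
This statement is quoted from Huang and Gu and is not proved in the present paper, so there is no in-paper argument to compare against; judged on its own terms, your proposal has a genuine gap, in two places. First, the opening inequality $3T(r,f)\le T(r,F)+N_{0}\left(r,\frac{1}{f^{(k)}}\right)+S(r,f)$ does not follow from what you cite: the lemma on the logarithmic derivative controls $m\left(r,\frac{f^{(k)}}{f}\right)$, not $m\left(r,\frac{f}{f^{(k)}}\right)$, and by the first fundamental theorem the latter equals $N\left(r,\frac{f^{(k)}}{f}\right)-N\left(r,\frac{f}{f^{(k)}}\right)+S(r,f)$, which can be as large as $N\left(r,\frac{1}{f}\right)+k\overline{N}(r,f)$. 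The standard repair is to estimate $3m\left(r,\frac{1}{f}\right)\le m\left(r,\frac{1}{F}\right)+m\left(r,\frac{f^{(k)}}{f}\right)+O(1)$ and then add $3N\left(r,\frac{1}{f}\right)$ by hand, which changes your bookkeeping.

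Second, and more seriously, the step you defer is the entire theorem. If you carry out the scheme you describe --- second main theorem for $F$ at $0,1,\infty$, the bound $\overline{N}(r,F)\le\overline{N}(r,f)$, zeros of $f$ of order $m$ giving zeros of $F$ of order at least $2m$, and Milloux's inequality for the remaining zeros of $f^{(k)}$ --- the $f$-dependent terms on the right-hand side add up to at least $3T(r,f)+S(r,f)$, and the whole inequality collapses to $0\le \overline{N}\left(r,\frac{1}{F-1}\right)+S(r,f)$, which says nothing. (You acknowledge that the naive constant is \enquote{too large}, but in fact no finite constant comes out at all.) The coefficient $6$ is not obtained by tightening this bookkeeping; it requires the Mues--Zhang--Huang--Gu machinery of auxiliary functions built from $F$, $F'$ and $F-1$, together with a pointwise case analysis of how poles of $f$ and zeros of $f$ and $f^{(k)}$ contribute to the counting functions of those auxiliary functions. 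Your proposal names such a device ($\varphi=F'/[F(F-1)]$) but does not execute any of it, so no bound of the stated form is actually established. It is worth noting that the paper's own Theorem 2.2, whose proof replaces the second main theorem by the elementary identity $\frac{1}{f^{\mu}}=\frac{bM[f]}{f^{\mu}}-\frac{(bM[f])'}{f^{\mu}}\cdot\frac{bM[f]-1}{(bM[f])'}$, requires $q_{0}\ge 3$ precisely because the case $q_{0}=2$ covered by Theorem D is out of reach of such soft arguments.
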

\begin{defi}(\cite{ld})
For a positive integer $k$, we denote $N^{\ast}_{k}(r,0;f)$ the counting function of zeros of $f$, where a zero of $f$ with multiplicity $q$ is counted $q$ times if $q\leq k$, and is counted $k$ times if  $q> k$.
\end{defi}
In 2003, I. Lahiri and S. Dewan (\cite{ld}) considered the value distribution of a differential polynomial in more general settings.
They proved the following theorem.
\begin{theoE}
Let $f$ be a transcendental meromorphic function and $\alpha=\alpha(z)(\not\equiv 0,\infty)$ be a small function of $f$. If  $\psi=\alpha(f)^{n}(f^{(k)})^{p}$, where $n(\geq 0)$ $p(\geq 1)$, $k(\geq 1)$ are integers, then for any small function $a=a(z)(\not\equiv 0,\infty)$ of $\psi$,
$$(p + n)T(r, f)\leq \overline{N}(r,\infty; f) + \overline{N}(r, 0; f) + pN^{\ast}_{k}(r, 0; f) + \overline{N}(r, a; \psi) + S(r, f).$$
\end{theoE}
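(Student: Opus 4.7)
The plan is to apply the second fundamental theorem to $\psi$ at the three targets $0$, $a$, and $\infty$, and to relate the resulting counting functions back to $f$ by means of a multiplicity analysis combined with the logarithmic derivative lemma. The starting observation is the identity
\[
\frac{1}{f^{\,n+p}}\;=\;\frac{\alpha}{\psi}\left(\frac{f^{(k)}}{f}\right)^{p},
\]
which, upon taking proximity functions and using $m(r,\alpha)=S(r,f)$ together with the Nevanlinna lemma on logarithmic derivatives $m(r,f^{(k)}/f)=S(r,f)$, yields $m(r,1/f^{n+p})\le m(r,1/\psi)+S(r,f)$. Since $m(r,1/f^{n+p})=(n+p)m(r,1/f)$, applying the first fundamental theorem to both $f$ and $\psi$ produces the intermediate inequality
\[
(n+p)T(r,f)\;\le\;(n+p)N(r,1/f)+T(r,\psi)-N(r,1/\psi)+S(r,f).
\]

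Next I would invoke the second fundamental theorem for $\psi$ with the three small targets $0$, $a$, $\infty$:
\[
T(r,\psi)\;\le\;\overline{N}(r,0;\psi)+\overline{N}(r,\infty;\psi)+\overline{N}(r,a;\psi)+S(r,f),
\]
where $S(r,\psi)=S(r,f)$ because $T(r,\psi)=O(T(r,f))$. Poles of $\psi$ occur only at poles of $f$ (apart from the negligible contribution of $\alpha$), so $\overline{N}(r,\infty;\psi)\le\overline{N}(r,\infty;f)+S(r,f)$.

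The core of the argument is the multiplicity bookkeeping at the zeros of $\psi$. At a zero of $f$ of multiplicity $q$, the factor $f^n$ contributes $nq$ to the zero order of $\psi$ while $(f^{(k)})^p$ contributes $p\max(0,q-k)$; at a zero of $f^{(k)}$ off the zero set of $f$, only the factor $(f^{(k)})^p$ contributes, and let $N_0(r,1/f^{(k)})$ and $\overline{N}_0(r,0;f^{(k)})$ denote the corresponding counting and reduced counting functions for such \emph{extra} zeros. Using $\sum\max(0,q-k)=N(r,1/f)-N^{\ast}_{k}(r,0;f)$, one obtains
\[
N(r,1/\psi)\;=\;(n+p)N(r,1/f)-pN^{\ast}_{k}(r,0;f)+pN_0(r,1/f^{(k)})+S(r,f),
\]
\[
\overline{N}(r,0;\psi)\;\le\;\overline{N}(r,0;f)+\overline{N}_0(r,0;f^{(k)})+S(r,f).
\]

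Plugging these estimates into the two displayed inequalities above, the $(n+p)N(r,1/f)$ terms cancel exactly, and the leftover auxiliary combination $\overline{N}_0(r,0;f^{(k)})-pN_0(r,1/f^{(k)})$ is non-positive whenever $p\ge 1$, since $\overline{N}_0\le N_0\le pN_0$. What remains is precisely the claimed bound. The main obstacle is the careful multiplicity accounting at zeros of $f$ of order $\le k$ (where $(f^{(k)})^p$ fails to vanish and produces the $N^{\ast}_{k}$ defect), together with the cancellation of the spurious contribution coming from zeros of $f^{(k)}$ away from the zero set of $f$; this last cancellation is exactly what makes the hypothesis $p\ge 1$ indispensable.
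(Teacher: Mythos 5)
Your argument is correct and follows essentially the same route as the paper's proof of its generalization (Theorem \ref{th1.1}): reduce $m(r,1/f^{n+p})$ to $m(r,1/\psi)$ via the logarithmic derivative lemma and the first fundamental theorem, apply the second main theorem to $\psi$ at the three small targets $0$, $a$, $\infty$, and close the gap by local multiplicity bookkeeping at the zeros of $f$ and of $f^{(k)}$. The only caveat is that your displayed formula for $N(r,1/\psi)$ should be an inequality $\geq$ rather than an equality (since $f^{(k)}$ may vanish to order higher than $q-k$ at a zero of $f$ of order $q$, and may even vanish at zeros of $f$ of order $q\leq k$), but that is precisely the direction your cancellation requires, so the proof stands.
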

The next theorem is an immediate consequence of the above theorem.
\begin{theoF} Let $f$ be a transcendental meromorphic function and $a$ be a non zero complex constant. Let $l\geq3$, $n\geq1$, $k\geq1$ be positive integers. Then
$$T(r,f)\leq \frac{1}{l-2}\ol{N}\bigg(r,\frac{1}{f^{l}(f^{(k)})^{n}-a}\bigg)+S(r,f).$$
\end{theoF}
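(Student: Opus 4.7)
The plan is to derive Theorem F directly from Theorem E by a careful choice of parameters and by estimating each counting function on the right-hand side of Theorem E in terms of the characteristic $T(r,f)$.

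First I would apply Theorem E with the small function $\alpha\equiv 1$ and with the roles of the exponents swapped to match Theorem F: take the Theorem-E exponent of $f$ to be $l$ and the Theorem-E exponent of $f^{(k)}$ to be $n$, so that $\psi=f^{l}(f^{(k)})^{n}$. Since the constant $a$ is certainly a small function of $\psi$, Theorem E yields
\[
(l+n)\,T(r,f)\le \overline{N}(r,\infty;f)+\overline{N}(r,0;f)+n\,N^{\ast}_{k}(r,0;f)+\overline{N}(r,a;\psi)+S(r,f).
\]

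Next I would bound the first three terms on the right uniformly by $T(r,f)+S(r,f)$. Clearly $\overline{N}(r,\infty;f)\le T(r,f)+O(1)$ and $\overline{N}(r,0;f)\le T(r,1/f)\le T(r,f)+O(1)$ by the first fundamental theorem. The key observation is that, by definition of $N^{\ast}_{k}(r,0;f)$, a zero of $f$ of multiplicity $q$ contributes $\min(q,k)\le q$, so that
\[
N^{\ast}_{k}(r,0;f)\le N(r,0;f)\le T(r,f)+O(1).
\]
Substituting these three estimates into the inequality from Theorem E gives
\[
(l+n)\,T(r,f)\le (n+2)\,T(r,f)+\overline{N}(r,a;\psi)+S(r,f),
\]
and rearranging yields $(l-2)\,T(r,f)\le \overline{N}\!\left(r,\tfrac{1}{f^{l}(f^{(k)})^{n}-a}\right)+S(r,f)$, which, since $l\ge 3$, is exactly the desired estimate.

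There is essentially no obstacle here beyond bookkeeping: the entire content lies in Theorem E, and the only mild subtlety is the bound $N^{\ast}_{k}(r,0;f)\le T(r,f)+O(1)$, which is immediate from the definition of $N^{\ast}_{k}$ but must be noted explicitly to absorb the factor $n$ into the single $T(r,f)$ term needed for the rearrangement.
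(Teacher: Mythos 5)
Your derivation is correct and is precisely what the paper intends: the paper offers no written proof of Theorem F, stating only that it is "an immediate consequence" of Theorem E, and your parameter choice ($\alpha\equiv 1$, Theorem E's exponents set to $l$ and $n$) together with the bounds $\overline{N}(r,\infty;f),\ \overline{N}(r,0;f),\ N^{\ast}_{k}(r,0;f)\le T(r,f)+O(1)$ is exactly that immediate consequence spelled out. No gaps.
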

In this direction, in 2009, Xu, Yi and Zhang (\cite{Xu}) proved the following theorem:
\begin{theoG}
 Let $f$ be a transcendental meromorphic function, and $k(\geq1)$ be a positive integer. If $N_{1}(r,0;f)=S(r,f)$, then
$$T(r,f)\leq 2\ol{N}\bigg(r,\frac{1}{f^{2}f^{(k)}-1}\bigg)+S(r,f).$$
\end{theoG}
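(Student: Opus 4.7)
My plan is to set $\psi := f^{2}(f^{(k)})^{n}$ and to apply Nevanlinna's Second Main Theorem to $\psi$ at the three target values $0$, $1$, $\infty$:
\[
T(r,\psi)\le \ol{N}(r,\psi)+\ol{N}\bigg(r,\frac{1}{\psi}\bigg)+\ol{N}\bigg(r,\frac{1}{\psi-1}\bigg)+S(r,\psi),
\]
where the lemma on logarithmic derivatives yields $S(r,\psi)=S(r,f)$. To convert this into an inequality for $T(r,f)$, I would use the identity $f^{n+2}=\psi\cdot(f/f^{(k)})^{n}$ together with $m(r,f^{(k)}/f)=S(r,f)$ and the first fundamental theorem to extract a lower estimate of the form $(n+2)T(r,f)\le T(r,\psi)+C\,\ol{N}(r,f)+S(r,f)$, which puts $T(r,f)$ on the left-hand side.

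The decisive ingredient is the hypothesis $N_{1}(r,0;f)=S(r,f)$: almost every zero of $f$ is multiple. At a zero of $f$ of multiplicity $m\ge 2$, a direct computation shows that $\psi$ vanishes to order $2m$ when $m\le k$ and to order $(n+2)m-nk$ when $m>k$; in both cases this is at least $4$. Hence
\[
\ol{N}\bigg(r,\frac{1}{f}\bigg)\le \frac{1}{4}N\bigg(r,\frac{1}{\psi}\bigg)+S(r,f),
\]
and a complementary Milloux-type estimate controls the contribution to $\ol{N}(r,1/\psi)$ coming from zeros of $f^{(k)}$ off the zero set of $f$, in terms of $\ol{N}(r,f)$ and a small fraction of $T(r,f)$. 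Combined with $\ol{N}(r,\psi)=\ol{N}(r,f)$, these bounds let me absorb $\ol{N}(r,\psi)+\ol{N}(r,1/\psi)$ into a suitable fraction of $T(r,\psi)$, leaving $\ol{N}(r,1/(\psi-1))$ as essentially the only surviving term on the right.

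The main obstacle will be obtaining the coefficient $2$ exactly rather than some larger constant. The $(n+2)$-factor gained from $T(r,\psi)\ge (n+2)T(r,f)-S(r,f)$ is essential: it rescales the SMT inequality so that the implicit coefficient on $\ol{N}(r,1/(\psi-1))$ collapses from $(n+2)$ to $1$, while an additional refinement---perhaps distinguishing simple from multiple zeros of $\psi-1$ by means of an auxiliary function such as $\psi''/\psi'-2\psi'/(\psi-1)$---is likely required to push the constant down to the claimed value $2$. Careful bookkeeping of the residual $\ol{N}(r,f)$ and $S(r,f)$ terms so that they either cancel or can be absorbed is where the real work of the proof lies.
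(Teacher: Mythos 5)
First, note that Theorem G is quoted from Xu--Yi--Zhang \cite{Xu} as background: this paper contains no proof of it, so there is no in-paper argument to compare yours against. Judged on its own terms, your proposal has a genuine gap at the decisive step. After applying the second main theorem to $\psi=f^{2}(f^{(k)})^{n}$ and converting $T(r,\psi)$ into $(n+2)T(r,f)$, the term $\ol{N}(r,\infty;\psi)=\ol{N}(r,\infty;f)$ survives on the right-hand side, and the whole content of the theorem --- and the source of the constant $2$ --- is the estimate of $\ol{N}(r,\infty;f)$ by $\ol{N}\left(r,\frac{1}{\psi-1}\right)+S(r,f)$. Your sketch supplies no mechanism for this: you only say the residual $\ol{N}(r,f)$ must ``cancel or be absorbed,'' and the auxiliary function you propose, $\psi''/\psi'-2\psi'/(\psi-1)$, is designed to separate simple from multiple $1$-points of $\psi$, which is beside the point since the target inequality already uses the reduced counting function. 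Without a pole estimate (in the literature this is the Mues/Zhang-type argument built from $\psi'/(\psi-1)$ and logarithmic derivatives of $f$), the route you describe stops at $T(r,f)\leq\ol{N}(r,\infty;f)+\ol{N}\left(r,\frac{1}{\psi-1}\right)+S(r,f)$, which is not the theorem.

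Two of your intermediate steps are also misdirected. The reduction of $(n+2)T(r,f)$ to $T(r,\psi)$ should come from $1/f^{n+2}=(f^{(k)}/f)^{n}\cdot(1/\psi)$ and the first fundamental theorem; the correction term is then $(n+2)N(r,0;f)$ --- zeros of $f$ counted with multiplicity --- not a multiple of the pole counting function $\ol{N}(r,f)$ as you wrote, and you must retain the accompanying $-N(r,0;\psi)$ from $m\left(r,\frac{1}{\psi}\right)=T(r,\psi)-N(r,0;\psi)+O(1)$ so that it can cancel against $(n+2)N(r,0;f)+\ol{N}(r,0;\psi)$. It is precisely in this cancellation, at zeros of $f$ of multiplicity $m$ with $2\leq m\leq k$ (where $\psi$ vanishes only to order $2m$, leaving a surplus $nm+1\leq(n+1)m$), that the hypothesis $N_{1}(r,0;f)=S(r,f)$ does its work. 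Your bound $\ol{N}(r,0;f)\leq\frac{1}{4}N(r,0;\psi)+S(r,f)$ is correct but is not the estimate this cancellation needs, and the ``Milloux-type estimate'' you invoke to control the zeros of $f^{(k)}$ off the zero set of $f$ does not exist as an upper bound: $N(r,0;f^{(k)})$ is not in general a small fraction of $T(r,f)$ plus $\ol{N}(r,f)$. In the correct bookkeeping those zeros enter through $\ol{N}(r,0;\psi)-N(r,0;\psi)\leq 0$ and require no bound at all. For contrast, the arguments this paper does give (Theorems \ref{th1.1}--\ref{th3}) avoid the pole estimate entirely by assuming $q_{0}\geq 3$, which is why they do not recover the constant $2$ of Theorem G.
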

Later, in 2011, Xu, Yi and Zhang (\cite{xu2}) removed the condition $N_{1}(r,0;f)=S(r,f)$ in above Theorem. They proved
\begin{theoH}
 Let $f$ be a transcendental meromorphic function, and $k(\geq1)$ be a positive integer.Then
$$T(r,f)\leq M\ol{N}\bigg(r,\frac{1}{f^{2}f^{(k)}-1}\bigg)+S(r,f),$$
where $M$ is $6$ if $k=1$, or $k\geq3$ and $M=10$ if $k=2$.
\end{theoH}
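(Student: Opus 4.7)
Set $\psi := f^{2}(f^{(k)})^{n}$. The plan is to apply Nevanlinna's second fundamental theorem to $\psi$ at the three values $0$, $1$, $\infty$, use the lemma on the logarithmic derivative to compare $T(r,\psi)$ with $T(r,f)$, and then absorb the auxiliary counting functions $\ol{N}(r,f)$, $\ol{N}(r,1/f)$ and $\ol{N}(r,1/f^{(k)})$ into $\ol{N}(r,1/(\psi-1))$ by exploiting the large multiplicities with which the zeros and poles of $f$ appear in $\psi$.

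For the characteristic comparison, writing $\psi = f^{n+2}(f^{(k)}/f)^{n}$ and dually $f^{n+2}=\psi(f/f^{(k)})^{n}$, the fact that $m(r,f^{(k)}/f)=S(r,f)$ gives
$$T(r,\psi) = (n+2)T(r,f) + O\!\left(\,N(r,1/f^{(k)})-N(r,1/f) + k\ol{N}(r,f)\,\right) + S(r,f),$$
and the standard inequality $N(r,1/f^{(k)})\le N(r,1/f)+k\ol{N}(r,f)+S(r,f)$ keeps the error on the right controlled. The second fundamental theorem then yields
$$T(r,\psi) \le \ol{N}(r,\psi) + \ol{N}(r,1/\psi) + \ol{N}\!\left(r,\tfrac{1}{\psi-1}\right) + S(r,f),$$
with $\ol{N}(r,\psi)=\ol{N}(r,f)$ and $\ol{N}(r,1/\psi)\le \ol{N}(r,1/f)+\ol{N}_{0}(r,1/f^{(k)})$, where $\ol{N}_{0}(r,1/f^{(k)})$ counts only the zeros of $f^{(k)}$ lying off the zero-set of $f$.

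The crucial step is to trade $\ol{N}(r,f)$, $\ol{N}(r,1/f)$ and $\ol{N}_{0}(r,1/f^{(k)})$ against $T(r,\psi)$. Any pole of $f$ of order $p$ is a pole of $\psi$ of order $(n+2)p+nk\ge n+2+nk$, so $(n+2+nk)\,\ol{N}(r,f)\le N(r,\psi)$. A zero of $f$ of order $m\ge k$ is a zero of $\psi$ of order $(n+2)m-nk\ge 2m$, giving an analogous absorption for the high-order zeros of $f$; the remaining low-order zeros, together with $\ol{N}_{0}(r,1/f^{(k)})$, are controlled by a Clunie/Milloux-type estimate applied to the identity $\psi=f^{2}(f^{(k)})^{n}$. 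Plugging the resulting bounds back into the second main theorem and solving for $T(r,f)$ produces the asserted inequality
$$T(r,f) \le M\,\ol{N}\!\left(r,\tfrac{1}{\psi-1}\right) + S(r,f).$$

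The main obstacle is pinning down the sharpest constant $M$ as a function of $k$. When $k=1$ or $k\ge 3$, both the pole orders of $\psi$ and the induced multiplicities of its zeros are large enough that a fairly economical accounting yields $M=6$; indeed for $k=1$ every multiple zero of $f$ already contributes a zero of $f'$, while for $k\ge 3$ the higher derivatives leave enough room to bound $\ol{N}_{0}(r,1/f^{(k)})$ cheaply. The case $k=2$ is genuinely tighter: at a \emph{simple} zero $z_{0}$ of $f$ the value $f''(z_{0})$ is generically nonzero, so $\psi$ vanishes only to order $2$ at $z_{0}$ and we lose the multiplicity gain normally used to absorb $\ol{N}(r,1/f)$. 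Compensating for this loss—essentially by introducing an additional auxiliary function or by applying the second fundamental theorem once more to squeeze the simple-zero contribution—accounts for the jump from $M=6$ to $M=10$ when $k=2$.
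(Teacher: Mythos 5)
First, a point of reference: the paper does not prove Theorem H at all --- it is quoted as background from Xu, Yi and Zhang \cite{xu2} --- so there is no in-paper proof to compare your argument against. Judged on its own terms, what you have written is an outline of the standard strategy in this literature (apply the second fundamental theorem to $\psi=f^{2}(f^{(k)})^{n}$ at $0$, $1$, $\infty$, then absorb $\ol{N}(r,f)$, $\ol{N}(r,1/f)$ and the zeros of $f^{(k)}$ using multiplicity considerations), but it is not a proof. Every step that actually produces the constants $6$ and $10$ is asserted rather than carried out: you write that ``a fairly economical accounting yields $M=6$'' and that the $k=2$ loss is compensated ``essentially by introducing an additional auxiliary function or by applying the second fundamental theorem once more,'' without exhibiting the accounting, the auxiliary function, or the second application. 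The appeal to ``a Clunie/Milloux-type estimate applied to the identity $\psi=f^{2}(f^{(k)})^{n}$'' is likewise unsubstantiated: Clunie's lemma applies to identities of the form $f^{n}P[f]=Q[f]$ with the degree of $Q$ controlled, and you do not explain how the low-order zeros of $f$, or the zeros of $f^{(k)}$ off the zero set of $f$, fit into such a scheme.

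There is also a structural problem with your absorption step. The second fundamental theorem hands you the \emph{reduced} counting functions $\ol{N}(r,1/\psi)$ and $\ol{N}(r,\psi)$, so the large multiplicities of the zeros and poles of $\psi$ do not directly shrink those terms; multiplicity only helps when you compare the integrated counting function $N$ against $T(r,\psi)$ via the first main theorem, and your sketch never sets up that comparison. Similarly, the displayed ``equality'' $T(r,\psi)=(n+2)T(r,f)+O(\cdots)+S(r,f)$ needs a genuine two-sided argument (the upper bound follows from $m(r,f^{(k)}/f)=S(r,f)$; the lower bound does not), and the direction you actually need is the one you do not justify. To turn this into a proof you would have to carry out the local analysis of \cite{xu2}: classify the zeros of $f$ by multiplicity relative to $k$, track exactly how many times each class is counted in $N(r,1/\psi)$, $\ol{N}(r,1/\psi)$ and $N(r,1/\psi')$, and only then does the case distinction between $k=2$ and $k\neq 2$ emerge quantitatively as the source of $M=6$ versus $M=10$. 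As it stands, the proposal identifies the right family of tools but contains none of the estimates that constitute the theorem.
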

Recently, Karmakar and Sahoo(\cite{KS}) considered the value distribution of the differential polynomial $f^{n}f^{(k)}-1$ where $n(\geq 2)$ and $k(\geq 1)$ are integers. Also, Xu and Ye(\cite{xu3}) studied the value distribution of the differential polynomial $\varphi f^{2} (f')^{2}- 1$, where $f$ is a transcendental meromorphic function, and $\varphi(z)$ is a small function of $f(z)$.\par
Before going to furthermore, we need to introduce the following definition:
\begin{defi} Let $f$ be nonconstant meromorphic function defined in the complex plane $\mathbb{C}$. Also, let $q_{0},q_{1},...,q_{k}$ be $(k+1)$ $(k\geq1)$ non-negative integers and $a(z)$ be a small function of $f$. Then the expression defined by $$M[f]=a(z)(f)^{q_{0}}(f')^{q_{1}}...(f^{(k)})^{q_{k}}$$ is known as differential monomial generated by $f$.\par
In this context, the terms  $\mu:=q_{0}+q_{1}+...+q_{k}$ and $\mu_{*}:=q_{1}+2q_{2}+...+kq_{k}$ are known as the degree and weight of the differential monomial respectively.
\end{defi}
Since the differential monomial $M[f]$ is the general form of $(f)^{q_{0}}(f^{(k)})^{q_{k}}$, so from the above discussion, the following questions are natural:
\begin{question} Does there exist positive constants $B_{1}(>0)$ and $B_{2}(>0)$ such that
\begin{enumerate}
\item [i)] $T(r,f)\leq B_{1}~ N\bigg(r,\frac{1}{M[f]-1}\bigg)+S(r,f),$ ~~\text{and}
\item [ii)] $T(r,f)\leq B_{2}~\ol{N}\bigg(r,\frac{1}{M[f]-1}\bigg)+S(r,f)$ ~~\text{hold?}
\end{enumerate}
\end{question}
In this paper, we deal with these questions.
\section{Main Results}
\begin{theo}\label{th1.1} Let $f$ be a transcendental meromorphic function and $\varphi(z)(\not\equiv0,\infty)$ be a small function of $f$. If $q_{0}(\geq 1)$, $q_{i}(\geq0)$ $(i=1,2,\ldots,k-1)$, $q_{k}(\geq1)$ are integers. Then
\begin{eqnarray}
\nonumber &&\mu T(r,f)\\
\nonumber  &\leq& \ol{N}(r,\infty;f)+(\mu-q_{0})N(r,0;f)+(1+\mu_{\ast})\ol{N}(r,0;f)+\ol{N}(r,\varphi(z);M[f])+S(r,f).
\end{eqnarray}
\end{theo}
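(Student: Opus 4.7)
The plan is to set $\psi := M[f]$, apply the second fundamental theorem to $\psi$ at the three small targets $0$, $\varphi$, and $\infty$, and then transfer the estimate back to $f$ through the factorization
\[
\psi \;=\; a(z)\, f^{\mu}\,\prod_{j=1}^{k}\left(\frac{f^{(j)}}{f}\right)^{q_{j}}.
\]
From this identity and the lemma on logarithmic derivatives I would obtain $\mu\, m(r,1/f) \leq m(r,1/\psi) + S(r,f)$. Then, writing $m(r,1/\psi)=T(r,\psi)-N(r,1/\psi)+O(1)$, bounding $T(r,\psi)$ by the second fundamental theorem, and using $\overline{N}(r,\psi)=\overline{N}(r,f)+S(r,f)$, I would arrive at
\[
\mu\, m(r,1/f) \leq \overline{N}(r,f) + \overline{N}(r,1/(\psi-\varphi)) - \bigl[N(r,1/\psi) - \overline{N}(r,1/\psi)\bigr] + S(r,f).
\]

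Adding $\mu N(r,1/f)$ to both sides reconstitutes $\mu T(r,f)$ on the left. To reach the claimed estimate, it then suffices to establish the multiplicity comparison
\[
q_{0}\,N(r,1/f) - (1+\mu_{\ast})\,\overline{N}(r,1/f) \;\leq\; N(r,1/\psi) - \overline{N}(r,1/\psi) + S(r,f).
\]
I plan to verify this pointwise at each zero of $f$; zeros of $\psi$ that are not zeros of $f$ only increase the right-hand side, and the contributions from zeros and poles of $a(z)$ are $S(r,f)$. If $f$ has a zero of multiplicity $m$ at $z_{0}$, then the multiplicity $n$ of $\psi$ at $z_{0}$ equals $m\mu-\mu_{\ast}$ when $m\geq k$ (since each $f^{(j)}$ contributes order $m-j$ there) and satisfies $n\geq q_{0}m$ when $m<k$ (as only the factor $f^{q_{0}}$ is guaranteed to vanish at $z_{0}$). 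In both regimes the pointwise inequality $n-1\geq q_{0}m-(1+\mu_{\ast})$ holds, which is precisely the local form of what is needed.

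The hardest part will be this multiplicity bookkeeping in the range $m<k$: one cannot predict whether $f^{(j)}$ vanishes at $z_{0}$ for $j>m$, so only the crude bound $n\geq q_{0}m$ is available. This is still sufficient because the coefficient $(1+\mu_{\ast})$ attached to $\overline{N}(r,1/f)$ on the right-hand side of the claimed estimate has been chosen precisely to absorb a deficit of $\mu_{\ast}$ at each distinct zero of $f$. Once the multiplicity comparison is in hand, substitution into the bound from the previous step yields the theorem.
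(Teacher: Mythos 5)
Your proposal is correct and follows essentially the same route as the paper: the logarithmic-derivative reduction $\mu\, m(r,1/f)\leq m(r,1/M[f])+S(r,f)$, Yamanoi's second main theorem for $M[f]$ at the three small targets $0,\varphi,\infty$, and the local multiplicity comparison $n\geq q_{0}m-\mu_{\ast}$ at zeros of $f$ (split into the cases $m\geq k$ and $m<k$, exactly as in the paper) are precisely the three ingredients of the paper's argument, merely rearranged between the $m$- and $N$-functions.
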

\begin{rem}
  Clearly Theorem \ref{th1.1} extends Theorem E.
\end{rem}
\begin{cor}\label{th1} Let $f$ be a transcendental meromorphic function and $\varphi(z)(\not\equiv0,\infty)$ be a small function of $f$. If $q_{0}(\geq 3+\mu_{\ast})$, $q_{i}(\geq0)$ $(i=1,2,\ldots,k-1)$, $q_{k}(\geq1)$ are integers. Then
\begin{eqnarray*}
T(r,f) &\leq& \frac{1}{q_{0}-2-\mu_{\ast}}\ol{N}\left(r,\frac{1}{M[f]-\varphi(z)}\right)+S(r,f).
\end{eqnarray*}
\end{cor}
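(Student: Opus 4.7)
The plan is to derive this as a direct consequence of Theorem \ref{th1.1}. Since Theorem \ref{th1.1} already gives the bound
$$\mu T(r,f) \leq \ol{N}(r,\infty;f)+(\mu-q_{0})N(r,0;f)+(1+\mu_{\ast})\ol{N}(r,0;f)+\ol{N}(r,\varphi(z);M[f])+S(r,f),$$
the work remaining is purely arithmetic: I need to absorb each $f$-counting function on the right into $T(r,f)$ using the First Main Theorem, then rearrange.

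First I would invoke the standard estimates $\ol{N}(r,\infty;f)\leq T(r,f)+O(1)$, $N(r,0;f)\leq T(r,1/f)\leq T(r,f)+O(1)$, and $\ol{N}(r,0;f)\leq T(r,f)+O(1)$, noting that the hypothesis $q_0 \geq 3+\mu_{\ast}\geq 3$ forces $\mu-q_0 \geq 0$ is not automatic — but since $\mu=q_0+q_1+\cdots+q_k$, the coefficient $(\mu-q_0)=q_1+\cdots+q_k\geq 0$ is indeed nonnegative, so the substitution is legitimate and no sign issue arises. Substituting these into the right-hand side of Theorem \ref{th1.1} yields
$$\mu T(r,f)\leq \bigl(1+(\mu-q_0)+(1+\mu_{\ast})\bigr)T(r,f)+\ol{N}(r,\varphi(z);M[f])+S(r,f).$$

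Next I would collect terms: the coefficient of $T(r,f)$ on the right simplifies to $\mu-q_0+2+\mu_{\ast}$, so subtracting that contribution from both sides leaves
$$(q_0-2-\mu_{\ast})T(r,f)\leq \ol{N}(r,\varphi(z);M[f])+S(r,f).$$
The hypothesis $q_0\geq 3+\mu_{\ast}$ ensures $q_0-2-\mu_{\ast}\geq 1>0$, so dividing is permitted and gives
$$T(r,f)\leq \frac{1}{q_0-2-\mu_{\ast}}\,\ol{N}\!\left(r,\frac{1}{M[f]-\varphi(z)}\right)+S(r,f),$$
after rewriting $\ol{N}(r,\varphi(z);M[f])=\ol{N}(r,1/(M[f]-\varphi))$.

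There is essentially no obstacle: the only point requiring a moment of care is checking the sign of $(\mu-q_0)$ so that replacing $N(r,0;f)$ by $T(r,f)$ preserves the inequality, and then verifying that $q_0-2-\mu_{\ast}$ is strictly positive under the stated hypothesis. Both are immediate from the definitions of $\mu$ and $\mu_\ast$ and from $q_0\geq 3+\mu_\ast$.
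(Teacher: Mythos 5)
Your derivation is correct and is exactly the route the paper intends: the corollary is stated as an immediate consequence of Theorem \ref{th1.1}, obtained by bounding $\ol{N}(r,\infty;f)$, $N(r,0;f)$ and $\ol{N}(r,0;f)$ by $T(r,f)+O(1)$, noting $\mu-q_0=q_1+\cdots+q_k\geq 0$, and rearranging so that the coefficient $q_0-2-\mu_{\ast}\geq 1$ permits division. Nothing is missing.
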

\begin{rem}
  Clearly, Corollary  \ref{th1} extends Theorem F.
\end{rem}
\begin{cor}Let $f$ be a transcendental meromorphic function and $\varphi(z)(\not\equiv0,\infty)$ be a small function of $f$. If $q_{0}(\geq 3+\mu_{\ast})$, $q_{i}(\geq0)$ $(i=1,2,\ldots,k-1)$, $q_{k}(\geq1)$ are integers. Then $M[f]-\varphi(z)$ has infinitely many zeros.
\end{cor}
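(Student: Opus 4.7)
The plan is to derive this corollary directly from Corollary~\ref{th1} (the quantitative estimate just proved) by a short contradiction argument, with no further machinery required. Suppose, contrary to the conclusion, that $M[f]-\varphi(z)$ has only finitely many zeros in $\mathbb{C}$. Then the reduced counting function satisfies
$$\ol{N}\!\left(r,\frac{1}{M[f]-\varphi(z)}\right)=O(\log r)\qquad (r\to\infty).$$

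Because $f$ is transcendental, $T(r,f)/\log r \to \infty$, so $\log r = S(r,f)$ in the sense used throughout the paper; in particular the quantity displayed above is itself $S(r,f)$. The hypothesis $q_{0}\geq 3+\mu_{\ast}$ makes $q_{0}-2-\mu_{\ast}\geq 1$ positive, so Corollary~\ref{th1} applies and yields
$$T(r,f)\leq \frac{1}{q_{0}-2-\mu_{\ast}}\,\ol{N}\!\left(r,\frac{1}{M[f]-\varphi(z)}\right)+S(r,f)=S(r,f),$$
which contradicts the defining property $S(r,f)=o(T(r,f))$ as $r\to\infty$ through $r\notin E$. Hence $M[f]-\varphi(z)$ must have infinitely many zeros.

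There is essentially no obstacle here: all the real work is already packaged into Corollary~\ref{th1}, and the deduction only uses the elementary fact that, for a transcendental meromorphic function, logarithmic growth is absorbed by $S(r,f)$. The only point to check is that the coefficient $1/(q_{0}-2-\mu_{\ast})$ is a finite positive constant under the stated hypotheses, which is immediate since $q_{0}\geq 3+\mu_{\ast}$ forces the denominator to be at least $1$.
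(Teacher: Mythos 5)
Your argument is correct and is exactly the intended deduction: the paper states this corollary as an immediate consequence of Corollary \ref{th1}, and the only ingredient needed beyond that estimate is Lemma \ref{lem1.5} (that $T(r,f)/\log r\to\infty$ for transcendental $f$), which is precisely what you invoke to absorb the $O(\log r)$ term into $S(r,f)$ and reach the contradiction $T(r,f)=S(r,f)$.
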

\begin{theo}\label{th2} Let $f$ be a transcendental meromorphic function and $\varphi(z)(\not\equiv0,\infty)$ be a small function of $f$. If $q_{0}(\geq 3)$, $q_{i}(\geq0)$ $(i=1,2,\ldots,k-1)$, $q_{k}(\geq1)$ are integers. Then
\beas  T(r,f) &\leq& \frac{1}{q_{0}-2}N\left(r,\frac{1}{M[f]-\varphi(z)}\right)+S(r,f).\eeas
\end{theo}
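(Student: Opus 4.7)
\noindent\textbf{Proof plan for Theorem \ref{th2}.} The plan is to bypass Corollary \ref{th1} (which would yield only the weaker constant $\tfrac{1}{q_{0}-2-\mu_{\ast}}$) and instead apply Nevanlinna's second main theorem directly to $g:=M[f]$, using the multiplicities of zeros of $g$ at zeros of $f$ to absorb the spurious $\mu_{\ast}$ factor that appears in the direct deduction from Corollary \ref{th1}.

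First, from the identity $f^{-\mu}=(a/g)\prod_{i=1}^{k}(f^{(i)}/f)^{q_{i}}$ and the lemma on logarithmic derivatives, I would obtain $\mu\,m(r,1/f)\le m(r,1/g)+S(r,f)$, which by the first main theorem converts into
\[
\mu T(r,f)+N(r,0;g)\le T(r,g)+\mu N(r,0;f)+S(r,f).
\]
Next, applying the second main theorem with the three small-function targets $0,\varphi,\infty$ to $g$ yields
\[
T(r,g)\le \overline{N}(r,0;g)+\overline{N}(r,\infty;g)+\overline{N}(r,\varphi;g)+S(r,f),
\]
where $S(r,g)=S(r,f)$ since $T(r,g)=\Theta(T(r,f))$, and $\overline{N}(r,\infty;g)=\overline{N}(r,\infty;f)+S(r,f)$.

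The decisive step is to estimate the difference $N(r,0;g)-\overline{N}(r,0;g)=\sum_{z_{0}}(\nu(z_{0})-1)$, where $\nu(z_{0})$ is the multiplicity of $g=M[f]$ at the zero $z_{0}$. A local Taylor-series inspection shows that at any zero $z_{0}$ of $f$ of multiplicity $p$, one has $\nu(z_{0})\ge q_{0}p$, since the factor $f^{q_{0}}$ already vanishes to order $q_{0}p$. Summing gives
\[
\sum_{z_{0}:\,g(z_{0})=0}(\nu(z_{0})-1)\ge q_{0}N(r,0;f)-\overline{N}(r,0;f).
\]
Substituting into the two preceding displays produces the refined inequality
\[
\mu T(r,f)\le \overline{N}(r,\infty;f)+(\mu-q_{0})N(r,0;f)+\overline{N}(r,0;f)+\overline{N}(r,\varphi;M[f])+S(r,f),
\]
in which the coefficient of $\overline{N}(r,0;f)$ is $1$ rather than the $1+\mu_{\ast}$ of Theorem \ref{th1.1}.

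Finally, using $\overline{N}(r,\varphi;M[f])\le N(r,\varphi;M[f])$ together with the trivial estimates $\overline{N}(r,\infty;f),\,N(r,0;f),\,\overline{N}(r,0;f)\le T(r,f)+O(1)$, one concludes $(q_{0}-2)T(r,f)\le N(r,\varphi;M[f])+S(r,f)$, which is the desired assertion. The main obstacle is the multiplicity bound $\nu(z_{0})\ge q_{0}p(z_{0})$: for $p\ge k$ one has the exact formula $\nu(z_{0})=\mu p-\mu_{\ast}$, but in the case $p<k$ some derivatives $f^{(i)}$ with $i\ge p$ may not vanish at $z_{0}$, and one must check that the contribution from $f^{q_{0}}$ alone still guarantees $\nu(z_{0})\ge q_{0}p$.
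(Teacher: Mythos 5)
Your argument is essentially sound, but it is not the route the paper takes for Theorem \ref{th2}: what you have written is really a sharpened version of the paper's proof of Theorem \ref{th1.1}. You keep the same two ingredients --- the identity $\mu\,m(r,1/f)\le m(r,1/M[f])+S(r,f)$ converted by the first main theorem, and Yamanoi's second main theorem for the three small targets $0,\varphi,\infty$ of $M[f]$ --- but you replace the paper's case-by-case computation of $N(r,0;f^{\mu})+\ol{N}(r,0;M[f])-N(r,0;M[f])$ (which yields the coefficient $1+\mu_{\ast}$ on $\ol{N}(r,0;f)$) with the cruder bound $\nu(z_{0})\ge q_{0}p$ coming from the factor $f^{q_{0}}$ alone. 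That bound is correct and the worry you raise at the end is a non-issue: at a zero of $f$ every factor $f^{(i)}$ is holomorphic, so the order of vanishing of the product is at least that of $f^{q_{0}}$, regardless of whether $p<k$; one only has to set aside the zeros and poles of the coefficient $a(z)$, which contribute $S(r,f)$. Your bookkeeping is in fact uniformly valid (for $p\ge k+1$ one checks $(\mu-q_{0})p+1\ge 1+\mu_{\ast}$), so you obtain the reduced-counting-function inequality with constant essentially $\frac{1}{q_{0}-2}$, which is stronger than both Theorem \ref{th2} and Corollary \ref{th1}. The paper instead proves Theorem \ref{th2} by the classical Hayman--Mues device: with $b=1/\varphi$ it writes
\begin{equation*}
\frac{1}{f^{\mu}}=\frac{b M[f]}{f^{\mu}}-\frac{(b M[f])'}{f^{\mu}}\cdot\frac{b M[f]-1}{(b M[f])'},
\end{equation*}
applies the lemma on the logarithmic derivative and Lemma \ref{lem1} to the function $g=bM[f]-1$, and uses the fact that $(bM[f])'$ vanishes to order at least $(q_{0}-1)p$ at a zero of $f$ of order $p$. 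What each approach buys: the paper's argument is elementary (no Yamanoi) and gives the exact constant $\frac{1}{q_{0}-2}$, but only with the integrated counting function $N$; yours gives the reduced counting function $\ol{N}$ and a better dependence on $\mu_{\ast}$, but invokes a much deeper theorem and, strictly speaking, only yields $(q_{0}-2-\varepsilon)T(r,f)\le \ol{N}\left(r,\frac{1}{M[f]-\varphi}\right)+S(r,f)$ for every $\varepsilon>0$, since Lemma \ref{yam} carries an $\varepsilon T$ term whose exceptional set may depend on $\varepsilon$; you should either state your conclusion in that form or add a word on how the $\varepsilon$ is removed. (The paper is itself cavalier about this very point in the proof of Theorem \ref{th1.1}, but its proof of Theorem \ref{th2} avoids the issue entirely.)
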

\begin{cor} Let $f$ be a transcendental meromorphic function and $\varphi(z)(\not\equiv0,\infty)$ be a small function of $f$. If $q_{0}(\geq 3)$, $q_{i}(\geq0)$ $(i=1,2,\ldots,k-1)$, $q_{k}(\geq1)$ are integers. Then $M[f]-\varphi(z)$ has infinitely many zeros.
\end{cor}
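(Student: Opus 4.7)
The plan is to deduce this corollary as an immediate consequence of Theorem~\ref{th2} by a contradiction argument. Suppose, to the contrary, that $M[f]-\varphi(z)$ has only finitely many zeros in $\mathbb{C}$. Then the (unreduced) counting function $N\!\left(r, 1/(M[f]-\varphi(z))\right)$ is $O(\log r)$ as $r \to \infty$.

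Next, I would invoke the standard fact from Nevanlinna theory that for a transcendental meromorphic function $f$ one has $T(r,f)/\log r \to \infty$ as $r\to\infty$; in particular, $\log r = S(r,f)$. Consequently
$$N\!\left(r, \frac{1}{M[f]-\varphi(z)}\right) = S(r,f).$$
Since $q_0 \geq 3$ guarantees that $1/(q_0-2)$ is a well-defined positive constant, substituting this estimate into the inequality furnished by Theorem~\ref{th2} yields
$$T(r,f) \leq \frac{1}{q_0-2}\, N\!\left(r, \frac{1}{M[f]-\varphi(z)}\right) + S(r,f) = S(r,f),$$
so $T(r,f) = S(r,f)$. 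Unpacking the definition, this forces $T(r,f) = o(T(r,f))$ as $r\to\infty$ outside a set of finite linear measure, which is impossible for a transcendental meromorphic function.

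There is no substantive obstacle here, as all the heavy lifting is done inside Theorem~\ref{th2}; the sole minor point worth flagging is the observation that $\log r = S(r,f)$ when $f$ is transcendental, which is exactly what makes the passage from ``finitely many zeros'' to the contradiction $T(r,f)=S(r,f)$ go through.
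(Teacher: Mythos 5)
Your proposal is correct and is exactly the deduction the paper intends: the corollary is stated as an immediate consequence of Theorem \ref{th2}, with the needed fact that $\log r = S(r,f)$ for transcendental $f$ supplied by Lemma \ref{lem1.5} of the paper. No discrepancies to report.
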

\begin{theo}\label{th3} Let $f$ be a transcendental meromorphic function and $\varphi(z)(\not\equiv0,\infty)$ be a small function of $f$. If every pole of $f$ has multiplicity atleast $l(\geq 1)$ and $q_{0}(\geq 1+\frac{2}{l})$, $q_{i}(\geq0)$ $(i=1,2,\ldots,k-1)$, $q_{k}(\geq1)$ are integers, then
\beas\label{133} T(r,f) &\leq& \frac{l}{lq_{0}-l-1}N\left(r,\frac{1}{M[f]-\varphi(z)}\right)+S(r,f).
\eeas
\end{theo}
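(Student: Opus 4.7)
The plan is to retrace the proof of Theorem \ref{th2} and refine it at the single point where the pole multiplicity of $f$ enters. I expect that the argument establishing Theorem \ref{th2}, before invoking the crude bound $\ol{N}(r,\infty;f)\leq T(r,f)+O(1)$, produces as an intermediate step an inequality of the form
\[
q_0\, T(r,f) \leq \ol{N}(r,\infty;f) + \ol{N}(r,0;f) + N\!\left(r,\frac{1}{M[f]-\varphi(z)}\right) + S(r,f),
\]
from which Theorem \ref{th2} is recovered by bounding the two reduced counting functions by $T(r,f)+O(1)$, exactly accounting for the denominator $q_0-2$.

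My first step would be to isolate this intermediate inequality from the argument for Theorem \ref{th2}. Under the present hypothesis that every pole of $f$ has multiplicity at least $l$, the pole counting function admits the sharper estimate $\ol{N}(r,\infty;f)\leq \tfrac{1}{l} N(r,\infty;f)\leq \tfrac{1}{l}T(r,f)+O(1)$. Substituting this together with $\ol{N}(r,0;f)\leq T(r,f)+O(1)$ yields
\[
\left(q_0-1-\tfrac{1}{l}\right) T(r,f) \leq N\!\left(r,\frac{1}{M[f]-\varphi(z)}\right) + S(r,f),
\]
which, after clearing fractions, is the target inequality. The hypothesis $q_0\geq 1+2/l$ is the clean integer condition guaranteeing that the coefficient $(lq_0-l-1)/l$ is at least $1/l>0$, so the division is legitimate; notice also that in the borderline case $l=1$ it forces $q_0\geq 3$, consistent with Theorem \ref{th2}.

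The hard part is not the execution but the verification: I must confirm that the proof of Theorem \ref{th2} really does expose $\ol{N}(r,\infty;f)$ with coefficient exactly one, rather than folding the pole contribution into a term that would be insensitive to the multiplicity assumption (for example, into the counting function for $M[f]-\varphi$ or into an $S(r,f)$ error). Since every pole of $M[f]$ is a pole of $f$, the $\ol{N}(r,\infty;f)$ term in the argument for Theorem \ref{th2} should trace back through an application of the second fundamental theorem to $M[f]$ at the value $\infty$, and this bookkeeping should go through cleanly. The only potential subtlety is that a pole of $f$ of order $p$ becomes a pole of $M[f]$ of order $p\mu+\mu_\ast$, so one must check that the passage from $\ol{N}(r,\infty;M[f])$ down to $\ol{N}(r,\infty;f)$ does not inadvertently introduce a multiplicative constant in $l$ that would spoil the numerator of the final coefficient.
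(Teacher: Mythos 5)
Your proposal is correct and follows essentially the same route as the paper: the proof of Theorem \ref{th2} does yield the intermediate inequality $(q_{0}-1)T(r,f)\leq \ol{N}(r,\infty;f)+N\left(r,\frac{1}{M[f]-\varphi(z)}\right)+S(r,f)$ (from (\ref{121}) together with the first fundamental theorem), with the pole term appearing with coefficient exactly one, and the paper's proof of Theorem \ref{th3} consists precisely of substituting $\ol{N}(r,\infty;f)\leq \frac{1}{l}N(r,\infty;f)\leq \frac{1}{l}T(r,f)+O(1)$ at that point. Your slightly different guess for the shape of the intermediate inequality (with $\ol{N}(r,0;f)$ exposed separately) is harmless, since both forms collapse to the same final coefficient $\frac{l}{lq_{0}-l-1}$.
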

\begin{cor}
Let $f$ be a transcendental meromorphic function having no simple pole and $\varphi(z)(\not\equiv0,\infty)$ be a small function of $f$. If $q_{0}(\geq2)$, $q_{i}(\geq0)$ $(i=1,2,\ldots,k-1)$, $q_{k}(\geq1)$ are integers, then $M[f]-\varphi(z)$ has infinitely many zeros.
\end{cor}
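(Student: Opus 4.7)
The plan is to mimic the proof of Theorem \ref{th2}, but to replace the crude estimate $\overline{N}(r,\infty;f)\leq T(r,f)+S(r,f)$ with the sharper bound $\overline{N}(r,\infty;f)\leq \frac{1}{l}N(r,\infty;f)\leq \frac{1}{l}T(r,f)+S(r,f)$ that is afforded by the hypothesis on the multiplicities of the poles of $f$. The extra $1-\frac{1}{l}$ thereby saved on the $\overline{N}(r,\infty;f)$-term is exactly what upgrades the constant from $\frac{1}{q_{0}-2}$ in Theorem \ref{th2} to $\frac{l}{lq_{0}-l-1}=\frac{1}{q_{0}-1-1/l}$ here.

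Concretely, by running the same chain of estimates that underlies the proof of Theorem \ref{th2}, starting from Theorem \ref{th1.1} and tracking the multiplicities at the zeros of $f$, one isolates the intermediate inequality
\[
(q_{0}-1)\,T(r,f)\leq \overline{N}(r,\infty;f)+N\!\left(r,\frac{1}{M[f]-\varphi(z)}\right)+S(r,f).
\]
If $\overline{N}(r,\infty;f)$ is bounded crudely by $T(r,f)+S(r,f)$ at this stage, one recovers precisely Theorem \ref{th2}. In our setting, however, every pole of $f$ has multiplicity at least $l$, so
\[
\overline{N}(r,\infty;f)\leq \frac{1}{l}N(r,\infty;f)\leq \frac{1}{l}T(r,f)+S(r,f),
\]
and substitution yields
\[
\left(q_{0}-1-\tfrac{1}{l}\right)T(r,f)\leq N\!\left(r,\frac{1}{M[f]-\varphi(z)}\right)+S(r,f).
\]
Because $q_{0}\geq 1+2/l$ forces $q_{0}-1-1/l\geq 1/l>0$, we may divide through and obtain the announced inequality with constant $\frac{l}{lq_{0}-l-1}$.

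The main obstacle is the derivation of the intermediate inequality. The raw statement of Theorem \ref{th1.1} carries the auxiliary terms $(\mu-q_{0})N(r,0;f)+(1+\mu_{\ast})\overline{N}(r,0;f)$ on the right-hand side; if these are dropped naively one is left with a spurious $\mu_{\ast}$ inside the denominator of the final estimate. To eliminate it, one uses that a zero of $f$ of multiplicity $m\geq k$ produces a zero of $M[f]$ of multiplicity $m\mu-\mu_{\ast}$, so that the zero-contributions can be absorbed into the $N\!\left(r,1/(M[f]-\varphi)\right)$-term rather than counted against $T(r,f)$. Once this multiplicity bookkeeping (which is the technical heart of Theorem \ref{th2}) is in place, the pole-multiplicity hypothesis enters only at the very last step through the sharper bound on $\overline{N}(r,\infty;f)$, and the rest of the argument is purely algebraic.
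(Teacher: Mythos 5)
Your overall strategy coincides with the paper's: the corollary is Theorem \ref{th3} specialized to $l=2$ (having no simple pole means every pole has multiplicity at least $2$, and $q_{0}\geq 2=1+\frac{2}{2}$), and you have correctly located the only place where the pole hypothesis enters, namely replacing $\overline{N}(r,\infty;f)\leq T(r,f)+S(r,f)$ by $\overline{N}(r,\infty;f)\leq\frac{1}{2}N(r,\infty;f)\leq\frac{1}{2}T(r,f)+S(r,f)$ in the inequality $(q_{0}-1)T(r,f)\leq\overline{N}(r,\infty;f)+N\left(r,\frac{1}{M[f]-\varphi(z)}\right)+S(r,f)$, which is what (\ref{122}) reduces to. However, two steps do not hold up as written. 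First, your justification of that intermediate inequality is not viable. You propose to remove the terms $(\mu-q_{0})N(r,0;f)+(1+\mu_{\ast})\overline{N}(r,0;f)$ inherited from Theorem \ref{th1.1} by \enquote{absorbing the zero-contributions into the $N\left(r,\frac{1}{M[f]-\varphi(z)}\right)$-term}. But a zero of $f$ at which $\varphi\neq0,\infty$ is a zero of $M[f]$ (since $q_{0}\geq1$), hence a point where $M[f]-\varphi=-\varphi\neq0$; such points contribute nothing to $N\left(r,\frac{1}{M[f]-\varphi(z)}\right)$, so there is nothing to absorb them into. Moreover Theorem \ref{th1.1} only delivers the reduced counting function $\overline{N}(r,\varphi(z);M[f])$, and the route through Theorem \ref{th1.1} is exactly what forces the stronger hypothesis $q_{0}\geq3+\mu_{\ast}$ in Corollary \ref{th1}. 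The paper's proofs of Theorems \ref{th2} and \ref{th3} do not pass through Theorem \ref{th1.1} at all: they start from the identity $\frac{1}{f^{\mu}}=\frac{bM[f]}{f^{\mu}}-\frac{(bM[f])'}{f^{\mu}}\cdot\frac{bM[f]-1}{(bM[f])'}$ with $b=1/\varphi$, apply the lemma on the logarithmic derivative, and handle the zeros of $f$ through the auxiliary function $h=(bM[f]-1)'/f^{q_{0}-1}$, which turns $\mu N\left(r,\frac{1}{f}\right)-N\left(r,\frac{1}{(bM[f])'}\right)$ into $(\mu-q_{0}+1)N\left(r,\frac{1}{f}\right)-N(r,0;h)$; the surviving $(\mu-q_{0}+1)N\left(r,\frac{1}{f}\right)$ is then harmless because it recombines with $\mu\, m\left(r,\frac{1}{f}\right)$ via the first fundamental theorem. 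Without this device (or an equivalent one) your intermediate inequality is asserted, not proved.

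Second, you stop at the growth inequality and never establish the statement actually claimed, namely that $M[f]-\varphi(z)$ has infinitely many zeros. The missing (short) step is: if $M[f]-\varphi(z)$ had only finitely many zeros, then $N\left(r,\frac{1}{M[f]-\varphi(z)}\right)=O(\log r)$, so the inequality would give $T(r,f)=O(\log r)$ as $r\to\infty$ outside a set of finite measure, contradicting Lemma \ref{lem1.5} because $f$ is transcendental. This deduction is the entire content of the corollary beyond Theorem \ref{th3}, so it should be stated explicitly.
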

%%%%%%%%%%%%%%%%%%%%%%%%%%%%%%%%%%%%%%%%%%%%%%%%%%%%%%%%%%%%%%%%%%%%%%%%%%%%%%%%%%%%%%%%%%%%%%%%%%%%%%%%%%%%%%%%%%%%%%%%%%%%
\section{Lemmas}
Let $M[f]=a(z)(f)^{q_{0}}(f')^{q_{1}}...(f^{(k)})^{q_{k}}$ be a differential monomial generated by a transcendental meromorphic function $f$ and $a(z)$ be a small function of $f$.\par
In this paper, we assume that $q_{0}(\geq1)$ and $q_{k}(\geq1)$ and $f$ is a transcendental meromorphic function.
\begin{lem}(\cite{yam})\label{yam}
Let $f$ be a non-constant meromorphic function on $\mathbb{C}$, and let $a_l,\ldots,a_q$ be distinct meromorphic functions on $\mathbb{C}$. Assume that $a_i$ are small functions with respect to $f$ for all $i=1,\ldots,q$. Then we have the second main theorem,
$$(q-2-\varepsilon)T(r,f)\leq\sum\limits_{i=1}^{q}\ol{N}(r,a_i,f)+S(r,f),$$
for all $\varepsilon>0$.
\end{lem}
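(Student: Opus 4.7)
The statement is Yamanoi's second main theorem for small moving targets \cite{yam}, a genuinely deep extension of Nevanlinna's classical second main theorem whose proof is far from routine. My plan is to follow Yamanoi's original strategy, whose key ingredients I now outline; the other results stated earlier in the excerpt do not help here, because they are \emph{consequences} of a second main theorem rather than tools for proving one.

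The naive approach would imitate the classical argument: apply the logarithmic derivative lemma to $f'/(f-a_i)$ and sum over $i$. This breaks down immediately, because $(f-a_i)' = f'-a_i' \neq f'$ once the $a_i$ are allowed to vary, and one cannot simultaneously normalize all $a_i$ to distinct constants by any Möbius change applied to $f$. A first workable step is to pass to the auxiliary quotients $g_{ij} = (f-a_i)/(f-a_j)$, whose zeros and poles record the $a_i$- and $a_j$-points of $f$ and which satisfy $T(r,g_{ij}) = T(r,f) + S(r,f)$ since the $a_i$ are small.

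The central technical device is a generalized Wronskian determinant $W = W(h_1,\ldots,h_N)$ constructed from $f$, the small functions $a_i$, and sufficiently many of their derivatives. Two separate estimates on $W$ are needed. First, the proximity estimate $m(r, W/P) = S(r,f)$ for an appropriate product $P$ of the $h_j$, obtained by iterating the lemma on logarithmic derivatives $m(r, g^{(k)}/g) = S(r,f)$ (valid for every $g$ with $T(r,g) = O(T(r,f))$); this supplies the left-hand side of the desired inequality through the first main theorem. Second, the zero divisor of $W$ must be compared to $\sum_i \ol{N}(r, a_i; f)$.

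The hard part, and the source of the $\varepsilon T(r,f)$ error, is precisely this comparison: one must show that each $a_i$-preimage of $f$ contributes to $N(r,0;W)$ with weight essentially $1$, the multiplicative loss being controllable by $\varepsilon$ at the cost of taking $N$ large. Yamanoi achieves this through a delicate geometric argument using Ahlfors-type covering-surface estimates and an averaging procedure over holomorphic disks in $\mathbb{C}$, combined with an inductive reduction on $q$ and on the pair structure of the $g_{ij}$. Once this bookkeeping is carried out, summing the resulting inequalities over $i = 1, \ldots, q$ and rearranging yields the stated bound $(q-2-\varepsilon)T(r,f) \le \sum_{i=1}^{q} \ol{N}(r,a_i;f) + S(r,f)$.
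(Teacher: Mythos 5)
The paper does not prove this lemma at all: it is quoted verbatim from Yamanoi's Acta Mathematica paper \cite{yam} and used as a black box, so there is no internal proof to compare against. You correctly recognize this and correctly observe that nothing else in the paper could yield it. However, what you have written is not a proof but a description of a proof, and the one step you defer --- showing that each $a_i$-point contributes to the relevant counting function with weight essentially $1$, i.e.\ obtaining the \emph{truncated} counting functions $\ol{N}(r,a_i;f)$ rather than $N(r,a_i;f)$ --- is precisely the entire content of Yamanoi's theorem. Writing ``Yamanoi achieves this through a delicate geometric argument'' is a citation, not an argument; as a blind proof attempt the proposal therefore has a gap exactly where the difficulty lies.

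One further caution on the strategy you outline: the generalized-Wronskian/auxiliary-function method of your middle paragraph is essentially the older Osgood--Steinmetz approach, and it is known to produce the second main theorem for small functions only with non-truncated counting functions (or truncation at a level depending on $q$), which is why the truncation-level-one statement remained open for two decades after Steinmetz. Yamanoi's actual proof does not obtain the truncation from Wronskian bookkeeping at all; it proceeds through Ahlfors' theory of covering surfaces and a geometric argument on moduli of pointed curves. So the two halves of your sketch belong to two different (and not straightforwardly compatible) proof architectures. For the purposes of this paper the honest course is the one the authors take: state the result as a quoted theorem of Yamanoi and do not attempt to reprove it.
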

\begin{lem}\label{lem1} For a non constant meromorphic function $g$, we obtain
$$N\left(r,\frac{g'}{g}\right)-N\left(r,\frac{g}{g'}\right)=\ol{N}(r,g)+N\left(r,\frac{1}{g}\right)-N\left(r,\frac{1}{g'}\right).$$
\end{lem}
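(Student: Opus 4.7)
The plan is to verify the identity by a pointwise analysis of the multiplicities of $g$ and $g'$ at every exceptional point. Since each counting function on either side is a sum over discrete points weighted by local orders (and then integrated in $t$), it is enough to show that at each $z_0$ which is a zero of $g$, a pole of $g$, or a zero of $g'$ lying in $\{g\ne 0,\infty\}$, the contribution to the left-hand side equals the contribution to the right-hand side.

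First I would compute $N(r,g'/g)$. At a zero of $g$ of order $m\ge 1$, writing $g(z)=(z-z_0)^m h(z)$ with $h(z_0)\ne 0,\infty$ yields $g'/g = m/(z-z_0) + h'/h$, so $g'/g$ has a simple pole at $z_0$; the same local computation at a pole of $g$ of order $m$ again gives a simple pole. Hence $N(r,g'/g)=\ol{N}(r,g)+\ol{N}(r,1/g)$. For $N(r,g/g')$ I would observe that at any zero or pole of $g$ the ratio $g/g'$ is in fact analytic with a simple zero, so the only poles of $g/g'$ occur at zeros of $g'$ inside $\{g\ne 0,\infty\}$, each contributing its full order. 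Since zeros of $g$ of order $m$ contribute $m-1$ to $N(r,1/g')$, amounting in total to $N(r,1/g)-\ol{N}(r,1/g)$, the remaining zeros of $g'$ contribute
\[
N(r,g/g') \;=\; N(r,1/g')-N(r,1/g)+\ol{N}(r,1/g).
\]

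Subtracting gives $N(r,g'/g)-N(r,g/g')=\ol{N}(r,g)+N(r,1/g)-N(r,1/g')$, which is exactly the asserted identity. The only subtle step is the bookkeeping at a zero of $g$ of order $m\ge 2$: there $g'$ vanishes to order $m-1$, but this vanishing is completely absorbed into the simple pole of $g'/g$ and does \emph{not} contribute to the zero set of $g'/g$; once that is tracked correctly, the identity falls out mechanically, with no appeal to the First or Second Main Theorem or to any logarithmic-derivative estimate.
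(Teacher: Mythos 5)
Your argument is correct, and it checks out at every point: $g'/g$ has only simple poles, located exactly at the zeros and poles of $g$; $g/g'$ has a simple zero at each such point, so its poles are precisely the zeros of $g'$ away from the zeros and poles of $g$, counted with full multiplicity; and splitting $N\left(r,\frac{1}{g'}\right)$ accordingly gives the stated identity. The paper itself supplies no argument here --- it only cites formula (12) of Jiang--Huang --- and your local multiplicity bookkeeping is exactly the standard verification underlying that formula, so your write-up in effect fills in the details the paper omits rather than taking a different route.
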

\begin{proof} The proof is same as  the formula (12) of (\cite{jh}).
\end{proof}
\begin{lem}\label{lem1.5}(\cite{f})
Let $f$ be a transcendental  meromorphic function defined in the complex plane $\mathbb{C}$. Then
$$\lim\limits_{r\to\infty}\frac{T(r,f)}{\log r}=\infty.$$
\end{lem}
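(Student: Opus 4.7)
The plan is to invoke the classical dichotomy in Nevanlinna theory: for a meromorphic function on $\mathbb{C}$, one has $T(r,f) = O(\log r)$ as $r\to\infty$ if and only if $f$ is a rational function. Since $f$ is assumed transcendental, $T(r,f)/\log r$ cannot stay bounded as $r\to\infty$. Should one wish to avoid citing this dichotomy directly, one would argue by contradiction: the hypothetical bound $T(r,f) \leq C\log r$ forces $N(r,f) = O(\log r)$, so $f$ has only finitely many poles; multiplying through by a suitable polynomial $P$ yields an entire function $g = Pf$ with $m(r,g) = O(\log r)$; a Poisson--Jensen estimate bounding $\log M(\rho,g)$ in terms of $m(r,g)$ for $\rho < r$ then confines $g$ to polynomial growth, making $f$ rational, a contradiction.

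To upgrade unboundedness of $T(r,f)/\log r$ to a genuine limit at infinity, I would use that $T(r,f)$ is nondecreasing and convex as a function of $\log r$. This is a standard feature of the Nevanlinna characteristic: $N(r,f)$ is convex in $\log r$ by construction (it is the integral of a nondecreasing step function against $dt/t$), and $m(r,f)$ is convex in $\log r$ by a Hadamard three-circle type argument applied to $\log^{+}|f|$. Writing $u(x) := T(e^{x},f)$, the function $u$ is nondecreasing and convex in $x$, so the quotient $u(x)/x$ is eventually monotone. An eventually monotone function that is unbounded above must diverge to $\infty$, yielding the asserted limit.

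The only substantive obstacle is the rational/transcendental dichotomy itself, which is the standard textbook fact covered by the citation \cite{f}; everything else, including the passage from unboundedness to divergence via the convexity of $T(r,f)$ in $\log r$, is elementary.
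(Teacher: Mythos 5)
The paper does not actually prove this lemma; it simply cites the Yang--Yi monograph, so there is no in-paper argument to compare against and your proposal must stand on its own. Its overall strategy is the standard one and is essentially correct: the contradiction argument in your first paragraph ($N(r,f)=O(\log r)$ gives finitely many poles, clearing them gives an entire $g=Pf$ with $m(r,g)=O(\log r)$, and the estimate $\log M(\rho,g)\leq\frac{r+\rho}{r-\rho}\,m(r,g)$ then forces polynomial growth) correctly shows that $T(r,f)/\log r$ cannot stay bounded when $f$ is transcendental, and the passage from unboundedness to an actual limit via convexity of $T(r,f)$ in $\log r$ is also sound in outline.

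The one step that fails is your justification of that convexity: $m(r,f)$ is \emph{not} in general a convex function of $\log r$ for meromorphic $f$, because $\log^{+}|f|$ is not subharmonic at the poles of $f$, so the three-circles mechanism does not apply. (Concretely, $m(r,\tan z)$ is nonnegative, bounded, tends to $0$ as $r\to 0$, and is not identically zero; no such function can be convex in $\log r$ on all of $\mathbb{R}$.) The correct route to convexity of $T(r,f)$ in $\log r$ is Cartan's identity, $T(r,f)=\frac{1}{2\pi}\int_{0}^{2\pi}N\left(r,e^{i\theta};f\right)d\theta+\log^{+}|f(0)|$, which expresses $T$ as an average of the functions $N\left(r,e^{i\theta};f\right)$, each of which is convex in $\log r$ for exactly the reason you give for $N(r,f)$. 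Alternatively, you can bypass convexity altogether: run your contradiction argument under the weaker hypothesis $\liminf_{r\to\infty}T(r,f)/\log r<\infty$, working along a sequence $r_{n}\to\infty$ with $T(r_{n},f)\leq C\log r_{n}$; the same chain of estimates along that sequence already makes $f$ rational, so the liminf, and hence the limit, must be $+\infty$. With either repair the proof is complete and is the expected textbook argument behind the citation.
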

\begin{lem}\label{lem2} Let $M[f]$ be differential polynomial generated by a transcendental meromorphic function $f$. Then $M[f]$ is non-constant.
\end{lem}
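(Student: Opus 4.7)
My plan is to argue by contradiction: I assume $M[f]\equiv c$ for some constant $c\in\mathbb{C}$ and split according to whether $c=0$ or $c\neq 0$. The case $c=0$ is immediate: since $a\not\equiv 0$ and the field of meromorphic functions on $\mathbb{C}$ has no zero divisors, one of the factors of $f^{q_{0}}(f')^{q_{1}}\cdots(f^{(k)})^{q_{k}}$ must vanish identically, so $f^{(j)}\equiv 0$ for some $j$ with $q_{j}\geq 1$. Since $q_{0}\geq 1$, either $f\equiv 0$ (impossible for a transcendental function) or $f^{(j)}\equiv 0$ for some $j\geq 1$, which forces $f$ to be a polynomial of degree at most $j-1$; in either case the transcendence of $f$ is violated.

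Assume now $c\neq 0$, so $M[f]$ has neither zeros nor poles. At a pole of $f$ of order $m\geq 1$ the monomial $M[f]$ acquires a pole whose order equals $m\mu+\mu_{\ast}$ shifted by the local order of $a$, while at a zero of $f$ of order $m\geq 1$ it acquires a zero of order at least $mq_{0}$ similarly shifted. For $M[f]$ to be regular and nonzero everywhere, each pole of $f$ must be cancelled by a sufficiently deep zero of $a$ and each zero of $f$ by a sufficiently deep pole of $a$. Since $a$ is small with respect to $f$, $N(r,a)+N(r,1/a)=S(r,f)$, and hence
\[
N(r,f)+N(r,1/f)=S(r,f).
\]
Combining this with the logarithmic derivative lemma $m(r,f^{(j)}/f)=S(r,f)$ and with the observation that poles of $f^{(j)}/f$ are located only at zeros and poles of $f$ with multiplicity at most $j$ each, I obtain $T(r,f^{(j)}/f)=S(r,f)$ for every $1\leq j\leq k$.

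Writing $M[f]=af^{\mu}R$ with $R:=\prod_{j=1}^{k}(f^{(j)}/f)^{q_{j}}$, the previous estimate gives $T(r,R)=S(r,f)$. From $M[f]=c$ I then get $f^{\mu}=c/(aR)$ and hence $\mu T(r,f)=T(r,aR)+O(1)=S(r,f)$. Since $\mu\geq q_{0}+q_{k}\geq 2$ and, by Lemma \ref{lem1.5}, $T(r,f)\to\infty$, this is the desired contradiction. I expect the main obstacle to be the step that converts the absence of zeros and poles of $M[f]$ into the counting estimate $N(r,f)+N(r,1/f)=S(r,f)$; every subsequent manipulation is a routine application of the logarithmic derivative lemma, but this conversion is the only step where the small-function hypothesis on $a$ plays an essential role.
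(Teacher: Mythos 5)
Your proof is correct, and its skeleton coincides with the paper's: both rest on the factorization $M[f]=a\,f^{\mu}\prod_{j=1}^{k}\bigl(f^{(j)}/f\bigr)^{q_{j}}$, the lemma on the logarithmic derivative, and the fact that $T(r,f)$ grows faster than any bounded quantity (Lemma \ref{lem1.5}). The one genuine divergence is in how the counting functions $\ol{N}(r,0;f)+\ol{N}(r,\infty;f)$, which control $N\bigl(r,f^{(j)}/f\bigr)$, are disposed of. The paper observes that (up to the zeros and poles of $a$) every zero or pole of $f$ produces a zero or pole of $M[f]$, and so obtains the unconditional inequality $\mu T(r,f)\leq(2\mu_{\ast}+1)T(r,M[f])+S(r,f)$, from which constancy of $M[f]$ is excluded in one stroke. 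You instead assume $M[f]\equiv c$ from the outset: for $c\neq0$ the absence of zeros and poles of $M[f]$ forces every zero (resp.\ pole) of $f$ to be cancelled by a pole (resp.\ zero) of the small function $a$, giving $N(r,0;f)+N(r,\infty;f)=S(r,f)$ directly, while the case $c=0$ needs the separate (easy) integral-domain argument. Your route is slightly longer because of the case split, and it uses the smallness of $a$ more explicitly — which is arguably a point in its favour, since the paper's step $\ol{N}(r,0;f)+\ol{N}(r,\infty;f)\leq N(r,0;M[f])+N(r,\infty;M[f])$ silently ignores possible cancellation by $a$ and really carries a hidden $S(r,f)$ term; the paper's route, on the other hand, buys a quantitative bound on $T(r,f)$ in terms of $T(r,M[f])$ that is of independent use.
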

\begin{proof}Here $$\left(\frac{1}{f}\right)^{\mu}=a(z)\left(\frac{f'}{f}\right)^{q_1}\left(\frac{f''}{f}\right)^{q_2}\ldots\left(\frac{f^{(k)}}{f}\right)^{q_{k}}\frac{1}{M[f]}.$$
Thus by the first fundamental theorem and lemma of logarithmic derivative, we have
\begin{eqnarray}
% \nonumber % Remove numbering (before each equation)
\nonumber \mu T(r,f) &\leq& \sum\limits_{i=1}^{k}q_{i}N\left(r,\left(\frac{f^{(i)}}{f}\right)\right)+T(r,M[f])+S(r,f)\\
\nonumber  &\leq& \sum\limits_{i=1}^{k}iq_{i}\left\{\ol{N}(r,0;f)+\ol{N}(r,\infty;f)\right\}+T(r,M[f])+S(r,f)\\
\nonumber  &\leq& \sum\limits_{i=1}^{k}iq_{i}\left\{N(r,0;M[f])+N(r,\infty;M[f])\right\}+T(r,M[f])+S(r,f)\\
\label{eqc1}  &\leq& (2\mu_{\ast}+1)T(r,M[f])+S(r,f),
\end{eqnarray}
Since $f$ is a transcendental meromorphic function, so by Lemma \ref{lem1.5} and inequlity (\ref{eqc1}),  $M[f]$ must be non-constant.
\end{proof}
\begin{lem}\label{lem3}
Let $f$ be a transcendental meromorphic function and $M[f]$ be a differential polynomial in $f$, then $$T\bigg(r,M[f]\bigg)=O(T(r,f))~~\text{and}~~S\bigg(r,M[f]\bigg)=S(r,f).$$
\end{lem}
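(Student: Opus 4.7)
The plan is to prove the two equalities by sandwiching $T(r,M[f])$ between constant multiples of $T(r,f)$, up to $S(r,f)$ error.

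For the upper bound $T(r,M[f])=O(T(r,f))$, I would rely on two standard facts. First, for every $j\geq 0$ the estimate $T(r,f^{(j)})\leq (j+1)T(r,f)+S(r,f)$ holds: by the first fundamental theorem $T(r,f^{(j)})=m(r,f^{(j)})+N(r,f^{(j)})$, and writing $m(r,f^{(j)})\leq m(r,f)+m(r,f^{(j)}/f)$, the second term is $S(r,f)$ by the logarithmic derivative lemma, while $N(r,f^{(j)})\leq (j+1)N(r,f)\leq (j+1)T(r,f)$. Secondly, $T(r,fg)\leq T(r,f)+T(r,g)$ for products. Applying these to the monomial $M[f]=a(z)(f)^{q_{0}}(f')^{q_{1}}\cdots(f^{(k)})^{q_{k}}$ gives
$$T(r,M[f])\leq T(r,a)+\sum_{i=0}^{k}q_{i}\,T(r,f^{(i)})\leq \Bigl(\sum_{i=0}^{k}q_{i}(i+1)\Bigr)T(r,f)+S(r,f),$$
since $T(r,a)=S(r,f)$. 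Hence $T(r,M[f])=O(T(r,f))$.

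For the equivalence $S(r,M[f])=S(r,f)$, I need the reverse comparison $T(r,f)=O(T(r,M[f]))$. This is precisely what the chain of inequalities in the proof of Lemma~\ref{lem2} delivers: it shows
$$\mu\,T(r,f)\leq (2\mu_{\ast}+1)\,T(r,M[f])+S(r,f).$$
Since $\mu\geq q_{0}\geq 1$, we deduce $T(r,f)=O(T(r,M[f]))+S(r,f)$. Combined with the upper bound, $T(r,f)$ and $T(r,M[f])$ are comparable up to bounded multiplicative constants outside an exceptional set of finite measure. Consequently, a quantity $\alpha(r)$ satisfies $\alpha(r)=o(T(r,f))$ (as $r\to\infty$, $r\notin E$) if and only if $\alpha(r)=o(T(r,M[f]))$, which is exactly the identity $S(r,M[f])=S(r,f)$.

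The only subtlety to watch is that the reverse direction genuinely needs the lower bound from Lemma~\ref{lem2}: the upper bound alone only gives $S(r,M[f])\subseteq S(r,f)$ but not the opposite inclusion. Apart from this, the proof is a routine application of the first fundamental theorem and the logarithmic derivative lemma, so no major obstacle is expected.
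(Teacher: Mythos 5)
Your proof is correct. The paper itself gives no argument here, only the remark that the proof is similar to Lemma 2.4 of the cited Li--Yang paper; your two-sided comparison (the routine upper bound $T(r,M[f])\leq\bigl(\sum_{i=0}^{k}q_i(i+1)\bigr)T(r,f)+S(r,f)$ via the logarithmic derivative lemma, plus the lower bound $\mu T(r,f)\leq(2\mu_{\ast}+1)T(r,M[f])+S(r,f)$ recycled from the displayed chain in Lemma \ref{lem2}) is exactly the standard argument being invoked, and you are right to point out that the lower bound is genuinely needed for the equality $S(r,M[f])=S(r,f)$ rather than a one-sided inclusion.
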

\begin{proof} The proof is similar to the proof of the Lemma 2.4 of (\cite{ly}).
\end{proof}
%%%%%%%%%%%%%%%%%%%%%%%%%%%%%%%%%%%%%%%%%%%%%%%%%%%%%%%%%%%%%%%%%%%%%%%%%%%%%%%%%%%%%%%%%%%%%%%%%%%%%%%%%%%%%%%%%%%%%%%%%%%%%%%%%%%
\section {Proof of the Theorems}
%%%%%%%%%%%%%%%%%%%%%%%%%%%%%%%%%%%%%%%%%%%%%%%%%%%%%%%%%%%%%%%%%%%%%%%%%%%%%%%%%%%%%%%%%%%%%%%%%%%%%%%%%%%%%%%%%%%%%%%%%%%%%%%%%%%
\begin{proof} [\textbf{Proof of Theorem \ref{th1.1}}]
Now
$$\frac{1}{f^{\mu}}=\frac{M[f]}{f^{\mu}}\frac{1}{M[f]}.$$
Thus by the first fundamental theorem and lemma of logarithmic derivative, we have
\begin{eqnarray}
% \nonumber % Remove numbering (before each equation)
\label{111}  \mu T(r,f) &=& N(r,\frac{1}{f^{\mu}})+ m(r,\frac{1}{f^{\mu}})+O(1)\\
\nonumber    &\leq& N(r,0;f^{\mu})+m\left(r,\frac{1}{M[f]}\right)+S(r,f)\\
\nonumber  &\leq& N(r,0;f^{\mu})+T(r,M[f])-N(r,0;M[f])+S(r,f).
\end{eqnarray}
Now, by Lemma \ref{yam}, we have
\bea \label{112} &&T(r,M[f])\\
\nonumber &&\leq \ol{N}(r,0;M[f])+\ol{N}(r,\infty;M[f])+\ol{N}(r,\varphi(z);M[f])+\varepsilon T(r,f)+S(r,f),\eea
for all $\varepsilon>0$.\par
Let $z_{0}$ be a zero of $f$ with multiplicity $q(\geq1)$. We assume that $z_{0}$ is not a zero or pole of $\varphi(z)$. Now we consider two cases:\\
\textbf{Case-I} If $q\geq k+1$, then $z_{0}$ is a zero of $M[f]$ of order $q\mu-\mu_{\ast}$. Now
$$\mu q +1-(q\mu-\mu_{\ast})\leq(\mu-q_{0})q+(1+\mu_{\ast}).$$
\textbf{Case-II} If $q\leq k$, then $z_{0}$ is a zero of $M[f]$ of order $qq_{0}+(q-1)q_{1}+(q-2)q_{2}+\ldots+2q_{q-2}+q_{q-1}$. Now
\beas && \mu q +1-\{qq_{0}+(q-1)q_{1}+(q-2)q_{2}+\ldots+2q_{q-2}+q_{q-1}\}\\
&=&(\mu-q_{0})q+1+\{q_{1}+2q_{2}+\ldots+(q-2)q_{q-2}+(q-1)q_{q-1}\}-(qq_1+qq_2+\ldots+qq_{q-1})\\
&\leq&(\mu-q_{0})q+(1+\mu_{\ast}).
\eeas
Now, by the first fundamental theorem, we obtain
$$N(r,0;\varphi(z))\leq T(r,\varphi(z))+O(1)=S(r,f),$$
and,
$$N(r,\infty;\varphi(z))\leq T(r,\varphi(z))+O(1)=S(r,f),$$
so from the above dicussion, we have
\bea\label{113} && N(r,0;f^{\mu})+\ol{N}(r,0;M[f])-N(r,0;M[f])\\
\nonumber &\leq& (\mu-q_{0})N(r,0;f)+(1+\mu_{\ast})\ol{N}(r,0;f).\eea
Combining (\ref{111}),(\ref{112}) and (\ref{113}), we have
\begin{eqnarray}
&&\mu T(r,f)\\
\nonumber  &\leq& N(r,0;f^{\mu})+T(r,M[f])-N(r,0;M[f])+S(r,f)\\
\nonumber  &\leq& N(r,0;f^{\mu})+\ol{N}(r,0;M[f])+\ol{N}(r,\infty;M[f])+\ol{N}(r,\varphi(z);M[f])-N(r,0;M[f])+S(r,f)\\
\nonumber  &\leq& \ol{N}(r,\infty;f)+(\mu-q_{0})N(r,0;f)+(1+\mu_{\ast})\ol{N}(r,0;f)+\ol{N}(r,\varphi(z);M[f])+S(r,f).
\end{eqnarray}
This completes the proof.
\end{proof}
\begin{proof} [\textbf{Proof of Theorem \ref{th2}}]
Assume that $b=b(z)=\frac{1}{\varphi(z)}$. Now by Lemma \ref{lem2}, it is clear that $b(z)M[f]$ is non-constant. Again
 $$\frac{1}{f^{\mu}}=\frac{b M[f]}{f^{\mu}}-\frac{(b M[f])'}{f^{\mu}}\frac{(b M[f]-1)}{(b M[f])'}.$$
Thus in view of Lemmas \ref{lem1} and \ref{lem3}, the first fundamental theorem and lemma of logarithmic derivative, we have
\bea\label{kabir5} &&\mu m\left(r,\frac{1}{f}\right)\\
\nonumber &\leq& m\left(r,\frac{b M[f]}{f^{\mu}}\right)+m\left(r,\frac{(b M[f])'}{f^{\mu}}\right)+m\left(r,\frac{b M[f]-1}{(b M[f])'}\right)+O(1)\\
\nonumber&\leq& 2m\left(r,\frac{b M[f]}{f^{\mu}}\right)+m\left(r,\frac{(b M[f])'}{b M[f]}\right)+m\left(r,\frac{b M[f]-1}{(b M[f])'}\right)+O(1)\\
\nonumber&\leq& T\left(r,\frac{b M[f]-1}{(b M[f])'}\right)-N\left(r,\frac{b M[f]-1}{(b M[f])'}\right)+S(r,f)\\
\nonumber&\leq& \ol{N}(r,\infty;f)+N\left(r,\frac{1}{b M[f]-1}\right)-N\left(r,\frac{1}{(b M[f])'}\right)+S(r,f)\eea
Let us define $g:=b M[f]-1$ and $h:=\frac{g'}{f^{q_{0}-1}}$. Then
\bea\label{121} &&\mu T(r,f)\\
\nonumber &\leq& (\mu-q_{0}+1) N\left(r,\frac{1}{f}\right)+ \ol{N}(r,\infty;f)+N\left(r,\frac{1}{M[f]-\varphi(z)}\right)-N\left(r,0;h\right)+S(r,f).
\eea
If $q_{0}\geq3$, then from (\ref{121}), we have
\beas\label{1211}  T(r,f) &\leq& \frac{1}{q_{0}-2}N\left(r,\frac{1}{M[f]-\varphi(z)}\right)+S(r,f).\eeas
\end{proof}
\begin{proof} [\textbf{Proof of Theorem \ref{th3}}]
Using (\ref{121}) and the first fundamental theorem, we have
\bea\label{122} &&(\mu-q_{0}+1) m\left(r,\frac{1}{f}\right)+(q_{0}-1) T(r,f)\\
\nonumber &\leq& N\left(r,\frac{1}{M[f]-\varphi(z)}\right)+\ol{N}(r,\infty;f)-N\left(r,0;h\right)+S(r,f)\\
\nonumber &\leq& N\left(r,\frac{1}{M[f]-\varphi(z)}\right)+\frac{1}{l}N(r,\infty;f)+S(r,f).
\eea
That is,
\beas\label{123} &&T(r,f)\\
\nonumber &\leq& \frac{l}{lq_{0}-l-1}N\left(r,\frac{1}{M[f]-\varphi(z)}\right)+S(r,f).
\eeas
\end{proof}
This completes the proof.
%%%%%%%%%%%%%%%%%%%%%%%%%%%%%%%%%%%%%%%%%%%%%%%%%%%%%%%%%%%%%%%%%%%%%%%%%%%%%%%%%%%%%%%%%%%%%%%%%%%%%%%%%%%%%%%%%%%%%%%%%%%%%%%%%%%
\section{Applications}
Let $D\subset\mathbb{\overline{C}}$ be a domain. A family $\mathscr{F}$ of analytic functions in $D$ is said to be normal if every sequence $\{f_n\}\subset\mathscr{F}$ has a convergent subsequence, which converges spherically, locally and uniformly in $D$ to a analytic function or $\infty$.\par
The aim of this sectionis to provide normality criterion for a family of analytic functions. \par
Using  Mues's result (\cite{m}), Pang (\cite{pang}) proved the following result:
\begin{theoI}(\cite{pang})
Let $\mathscr{F}$ be a family of meromorphic function on a domain $D$. If each $f\in \mathscr{F}$  satisfies $f^2f'\not=1$, then $\mathscr{F}$ is normal on domain $D$.
\end{theoI}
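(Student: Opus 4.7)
The strategy is to apply the Zalcman--Pang rescaling lemma in order to reduce the problem to a statement about a single non-constant meromorphic function on $\mathbb{C}$, and then to invoke Mues's theorem (the case $n=2$ of Hayman's conjecture, already recalled in the introduction). Assume for contradiction that $\mathscr{F}$ is not normal at some $z_{0}\in D$. Applying Zalcman--Pang with the rescaling exponent $\alpha=1/3$---the unique choice which renders the differential expression $f^{2}f'$ invariant under the rescaling---one obtains sequences $\{f_{n}\}\subset\mathscr{F}$, $z_{n}\to z_{0}$, and $\rho_{n}\to 0^{+}$ such that
$$g_{n}(\zeta):=\rho_{n}^{-1/3}f_{n}(z_{n}+\rho_{n}\zeta)$$
converges locally uniformly on $\mathbb{C}$, in the spherical metric, to a non-constant meromorphic function $g$ of finite order. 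A direct computation gives $g_{n}^{2}(\zeta)g_{n}'(\zeta)=f_{n}^{2}f_{n}'(z_{n}+\rho_{n}\zeta)$, so by the hypothesis of the theorem, $g_{n}^{2}g_{n}'-1$ is zero-free on $\mathbb{C}$ for every $n$.

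Since $g_{n}^{2}g_{n}'\to g^{2}g'$ locally uniformly off the poles of $g$, Hurwitz's theorem leaves two possibilities: either $g^{2}g'\equiv 1$ on $\mathbb{C}$, or $g^{2}g'$ omits the value $1$ on its domain of definition. The first possibility is excluded because $g^{2}g'\equiv 1$ integrates to $g^{3}=3\zeta+c$ for some constant $c$, and the simple zero of $3\zeta+c$ would force $g$ to have a branch point, which is impossible for a function meromorphic on $\mathbb{C}$. For the second possibility, if $g$ is transcendental, then Mues's theorem furnishes infinitely many zeros of $g^{2}g'-1$, a contradiction. If instead $g$ is a non-constant rational function, one completes the argument by a direct count of zeros and poles of $g^{2}g'-1$ on the Riemann sphere: a pole of $g$ of order $p$ becomes a pole of $g^{2}g'$ of order $3p+1$, and a zero of $g$ of order $m\geq 1$ becomes a zero of $g^{2}g'$ of order $3m-1\geq 2$; this forces $g^{2}g'-1$ to possess at least one zero in $\mathbb{C}$, yielding the final contradiction.

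The principal obstacle I expect to encounter is the rational case of the last step, since Mues's theorem disposes only of transcendental counterexamples and a separate degree-theoretic argument on the Riemann sphere---based on the zero/pole orders indicated above---is needed to rule out non-constant rational $g$ with $g^{2}g'$ omitting the value $1$. The remaining ingredients---the choice $\alpha=1/3$, the verification that $g_{n}^{2}g_{n}'$ coincides with $f_{n}^{2}f_{n}'$ at the shifted point, the application of Hurwitz, and the integration argument---are routine once the Zalcman--Pang lemma has been correctly set up.
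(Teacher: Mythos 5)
The paper states Theorem I only as a quoted result of Pang \cite{pang} and gives no proof of it, so the only in-paper point of comparison is the proof of Theorem \ref{thn}, which runs on exactly the rails you describe: Zalcman--Pang rescaling (Lemma \ref{holi}), Hurwitz's theorem, a value-distribution theorem to eliminate transcendental limit functions, and a separate argument for the non-transcendental case. Your proposal is correct and is essentially that argument adapted to a meromorphic family: the normalization $\alpha=1/3$ is the right one (and lies in the admissible range $0\leq\alpha<1$ of Lemma \ref{holi} with $k=1$), the identity $g_n^2(\zeta)g_n'(\zeta)=f_n^2f_n'(z_n+\rho_n\zeta)$ is correct, the exclusion of $g^2g'\equiv1$ via $g^3=3\zeta+c$ is correct, and Mues's theorem \cite{m} handles transcendental $g$, where the paper's Theorem \ref{thn} instead invokes Theorem \ref{th2}. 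The genuine extra work relative to the paper is the rational case (the paper restricts to analytic families, so its limit is a polynomial and the Fundamental Theorem of Algebra suffices), and this is the one step you should write out in full: your local orders ($3p+1$ at a pole of order $p$, $3m-1\geq2$ at a zero of order $m$) are right, but the clean way to finish is to note that $R:=g^2g'=\tfrac{1}{3}(g^3)'$ cannot be constant (else $g^3$ would be a polynomial of degree at most $1$, which is not the cube of a rational function) and that $R(\infty)\in\{0,\infty\}$ for every non-constant rational $g$ (a pole of $g$ at $\infty$ of order $s\geq1$ gives a pole of $R$ there of order $3s-1\geq2$, while if $g(\infty)$ is finite then $g'$, hence $R$, vanishes at $\infty$ to order at least $2$); consequently none of the $\deg R$ solutions of $R=1$ on the Riemann sphere can sit at $\infty$, and at least one lies in $\mathbb{C}$, giving the desired contradiction.
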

In this sequel, in 2005, Huang and Gu (\cite{hg}) proved the following theorem:
\begin{theoJ}(\cite{hg})
Let $\mathscr{F}$ be a family of meromorphic functions on a domain $D$ and let $k$ be a positive integer. If for each $f\in \mathscr{F}$, $f$ has only zeros of multiplicity at least $k$ and $f^{2}f^{(k)}\not=1$, then $\mathscr{F}$ is normal on domain $D$.
\end{theoJ}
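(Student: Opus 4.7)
The plan is to argue by contradiction via the Pang--Zalcman rescaling lemma, exploiting the scale-invariance of $f^{2}f^{(k)}$ under the exponent $\alpha=k/3$, and then to rule out the resulting limit function using Theorem D together with an elementary analysis of the zero-free entire and rational subcases.

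Suppose, toward a contradiction, that $\mathscr{F}$ is not normal at some $z_{0}\in D$. Since each $f\in\mathscr{F}$ has only zeros of multiplicity at least $k$, the Pang--Zalcman rescaling lemma applies with the scaling exponent $\alpha=k/3$, which sits in the admissible range $(-1,k)$ for every $k\ge 1$. It produces sequences $f_{n}\in\mathscr{F}$, $z_{n}\to z_{0}$ and $\rho_{n}\to 0^{+}$ such that
$$g_{n}(\zeta)\;:=\;\rho_{n}^{-k/3}\,f_{n}(z_{n}+\rho_{n}\zeta)$$
converges spherically and locally uniformly on $\mathbb{C}$ to a nonconstant meromorphic function $g$ of finite order whose zeros still have multiplicity at least $k$. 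A direct chain-rule computation shows that the exponent $k/3$ is precisely the one that makes $f^{2}f^{(k)}$ invariant under this rescaling, so $g_{n}^{2}\,g_{n}^{(k)}(\zeta)=f_{n}^{2}\,f_{n}^{(k)}(z_{n}+\rho_{n}\zeta)\ne 1$ on every compact subset of $\mathbb{C}$ for large $n$. Hurwitz's theorem then forces the limit to satisfy either $g^{2}g^{(k)}\equiv 1$ on $\mathbb{C}$ or $g^{2}g^{(k)}$ to omit the value $1$ on $\mathbb{C}$.

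To dispose of the omission alternative I would invoke Theorem D applied to $g$: if $g$ is transcendental meromorphic, the right-hand side of that estimate collapses to $S(r,g)$, which forces $T(r,g)=S(r,g)$, an absurdity; if $g$ is rational, any pole of $g$ would produce a pole of $g^{2}g^{(k)}$, so $g$ must be a polynomial of degree $d\ge k$, whence $g^{2}g^{(k)}$ is a nonconstant polynomial of degree $3d-k$ and so attains the value $1$. For the identity alternative $g^{2}g^{(k)}\equiv 1$, an order count at a putative zero or pole of $g$ shows $g$ to be entire and zero-free, hence $g=e^{h}$ for some polynomial $h$ (using that $g$ has finite order). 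Writing $g^{(k)}=P_{k}(h',\ldots,h^{(k)})\,e^{h}$ with the standard Bell-type polynomial $P_{k}$ and substituting into the identity yields $P_{k}\equiv e^{-3h}$; but the left side is a polynomial in $z$ while the right side is transcendental unless $h$ is constant, contradicting the nonconstancy of $g$.

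The main obstacle I foresee is the bookkeeping inside the Pang--Zalcman step: one has to check carefully that the scaling exponent $k/3$ is admissible for a family whose members have zeros of multiplicity at least $k$, and that the multiplicity $\ge k$ descends from the $g_{n}$ to the limit $g$ via Hurwitz. Once this bookkeeping is cleanly in place, the invariant identity $g_{n}^{2}\,g_{n}^{(k)}\ne 1$ and the routine disposal of the two Hurwitz cases above close the argument and establish normality of $\mathscr{F}$.
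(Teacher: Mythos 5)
Your overall strategy is the right one, and it mirrors what the paper actually does: Theorem J itself is only quoted from Huang and Gu and is not proved in the paper; the paper's own normality result (Theorem \ref{thn}) is the analogous statement for \emph{analytic} families, proved by the Zalcman--Pang rescaling with exactly your exponent $\alpha=\mu_{\ast}/\mu=k/3$, followed by Hurwitz's theorem and the value-distribution estimate to exclude a transcendental limit. Your admissibility check $0\le k/3<k$, the scale-invariance computation, the descent of the multiplicity condition via Hurwitz, the disposal of the transcendental omission case via Theorem D, and your treatment of the identity alternative $g^{2}g^{(k)}\equiv 1$ (local order count at zeros and poles, then $g=e^{h}$ and comparison of the polynomial $P_{k}$ with $e^{-3h}$) are all sound. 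A minor caveat: the finite order of $g$, which you use in the identity case, comes from the refined Pang--Zalcman lemma, not from the version stated as Lemma \ref{holi}.

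The genuine gap is in the rational omission case. You write that ``any pole of $g$ would produce a pole of $g^{2}g^{(k)}$, so $g$ must be a polynomial,'' but this is a non sequitur: a pole of $g^{2}g^{(k)}$ is a point where the function takes the value $\infty$, which is entirely compatible with $g^{2}g^{(k)}$ omitting the value $1$ on $\mathbb{C}$ (compare $1+1/z$, a nonconstant rational function that omits $1$). Since Theorem J concerns \emph{meromorphic} families, the limit $g$ may well be a nonconstant rational function with poles, and excluding $g^{2}g^{(k)}\ne 1$ for such $g$ needs a real argument: for instance, if $R:=g^{2}g^{(k)}$ is nonconstant and omits $1$, then $R-1$ has no zeros, so $R-1=c/S$ for a polynomial $S$ and $c\ne 0$, and one must derive a contradiction by comparing the zero and pole divisors of $R=(S+c)/S$ with those forced by $g$ (a zero of order $3q-k\ge 2k$ at each zero of $g$ of multiplicity $q\ge k$, a pole of order $3m+k$ at each pole of $g$ of order $m$); the constant case $R\equiv c$ must also be handled. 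This is precisely the delicate case in Huang and Gu's proof, and it is precisely the case the paper sidesteps by restricting Theorem \ref{thn} to analytic families, where Hurwitz forces $g$ to be entire and ``rational'' really does reduce to ``polynomial.'' As written, your argument proves the analytic version but not Theorem J.
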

Using Theorem \ref{th2}, we provide a normality criterion for a family of analytic functions.
\begin{theo}\label{thn}
Let $\mathscr{F}$ be a family of analytic functions on a domain $D$ and let $k(\geq1)$,  $q_{0}(\geq 3)$, $q_{i}(\geq0)$ $(i=1,2,\ldots,k-1)$, $q_{k}(\geq1)$ be positive integers. If for each $f\in \mathscr{F}$,
\begin{enumerate}
  \item [i.] $f$ has only zeros of multiplicity at least $k$
  \item [ii.] $f^{q_{0}}(f')^{q_{1}}\ldots(f^{(k)})^{q_{k}}\not=1$,
\end{enumerate}
then $\mathscr{F}$ is normal on domain $D$.
\end{theo}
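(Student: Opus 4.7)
The plan is to run a standard Zalcman-rescaling argument and then invoke the quantitative estimate of Theorem \ref{th2}. Assume, for contradiction, that $\mathscr{F}$ fails to be normal at some point $z_0 \in D$. Set $\alpha = \mu_{\ast}/\mu$; since $q_0 \geq 3$ one computes that $k\mu - \mu_{\ast} \geq kq_0 \geq 3k > 0$, so $0 \leq \alpha < k$. Because every $f \in \mathscr{F}$ has only zeros of multiplicity at least $k$, the Pang-Zalcman rescaling lemma applies with this exponent and supplies sequences $f_n \in \mathscr{F}$, $z_n \to z_0$, and $\rho_n \to 0^+$ for which
\[
g_n(\zeta) := \rho_n^{-\alpha}\, f_n(z_n + \rho_n\zeta) \longrightarrow g(\zeta)
\]
locally uniformly in $\mathbb{C}$, with $g$ a non-constant entire function of finite order whose zeros have multiplicity at least $k$.

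The choice $\alpha = \mu_{\ast}/\mu$ is forced by the scaling identity $g_n^{(i)}(\zeta) = \rho_n^{i-\alpha} f_n^{(i)}(z_n+\rho_n\zeta)$, which yields
\[
M[g_n](\zeta) = \rho_n^{\mu_{\ast} - \alpha\mu}\, M[f_n](z_n+\rho_n\zeta) = M[f_n](z_n + \rho_n\zeta).
\]
Since the hypothesis gives $M[f_n] \neq 1$ on $D$, we get $M[g_n](\zeta) \neq 1$ for all $\zeta \in \mathbb{C}$ and all sufficiently large $n$. Passing to the limit (all derivatives converge locally uniformly), Hurwitz's theorem forces either $M[g] \equiv 1$ or $M[g](\zeta) \neq 1$ for every $\zeta \in \mathbb{C}$.

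Both alternatives then need to be excluded. If $M[g] \equiv 1$, then $q_0 \geq 1$ prevents $g$ from having any zero, so $g = e^{p}$ for a polynomial $p$ (using that $g$ is entire of finite order and zero-free). A direct induction writes $g^{(i)} = P_i(p', p'', \ldots, p^{(i)})\, e^p$, from which $M[g] = R(z)\, e^{\mu p}$ with $R$ a polynomial in $z$; this cannot be identically $1$ unless $p$ is constant, contradicting the non-constancy of $g$. If instead $M[g] \neq 1$ everywhere and $g$ is transcendental, Theorem \ref{th2} with $\varphi \equiv 1$ gives $T(r,g) \leq \frac{1}{q_0 - 2}\, N\!\left(r, 1/(M[g]-1)\right) + S(r,g) = S(r,g)$, which is impossible. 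If $g$ is a non-constant polynomial, then the multiplicity-$k$ condition on its zeros forces $\deg g \geq k$, so $M[g]$ is a polynomial of degree $(\deg g)\mu - \mu_{\ast} \geq kq_0 \geq 3$; the fundamental theorem of algebra then forces $M[g] - 1$ to have a zero, again contradicting $M[g] \neq 1$.

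The principal obstacle I anticipate is identifying the correct scaling exponent $\alpha = \mu_{\ast}/\mu$ so that the omitted value $1$ transfers cleanly to the limit, and simultaneously guaranteeing $\alpha < k$ (which is precisely where $q_0 \geq 3$ is used) so that Pang-Zalcman produces an entire limit of finite order to which Theorem \ref{th2} is applicable. Once the limit $g$ is in hand, both Hurwitz branches close quickly via Theorem \ref{th2} and the elementary analysis of $M[e^p]$ together with the polynomial degree count.
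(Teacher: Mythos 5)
Your argument is essentially the paper's own proof: Pang--Zalcman rescaling with the exponent $\alpha=\mu_{\ast}/\mu$ chosen so that the omitted value $1$ transfers to the limit, Hurwitz's theorem, Theorem \ref{th2} with $\varphi\equiv 1$ to rule out a transcendental limit, and the degree count plus the fundamental theorem of algebra to rule out a polynomial limit. The only deviation is that you explicitly dispose of the Hurwitz alternative $M[g]\equiv 1$ via the zero-free, finite-order analysis $g=e^{p}$, a branch the paper's observation (d) passes over without comment; this is added care within the same method rather than a different route.
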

Before going to prove the above result, we need to recall a lemma.
\begin{lem}(\cite{sc})\label{holi}
  Let $\mathscr{F}$ be a family of meromorphic functions on the unit disc $\Delta$ such that all zeros of functions in $\mathscr{F}$ have multiplicity at least $k$. Let $\alpha$ be a real number satisfying $0\leq\alpha<k$. Then $\mathscr{F}$ is not normal in any neighbourhood of $z_0\in \Delta$ if and only if there exist
  \begin{enumerate}
    \item [(i)] points $z_k \in \Delta$, $z_k\rightarrow z_0$;
    \item [(ii)] positive numbers $\rho_k$, $\rho_k\rightarrow0$; and
    \item [(iii)] functions $f_k\in  \mathscr{F}$
  \end{enumerate}
such that $\rho_{k}^{-\alpha}f_{k}(z_k+\rho_{k}\zeta)\rightarrow g(\zeta)$ spherically uniformly on compact subsets of $\mathbb{C}$, where $g$ is a nonconstant meromorphic function.
\end{lem}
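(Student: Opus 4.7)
My plan is to establish both directions of this equivalence via Marty's criterion, which characterizes normality at $z_0$ by the local uniform boundedness of the spherical derivatives $\{f^\#:f\in\mathscr{F}\}$ near $z_0$. For the sufficient direction ($\Leftarrow$), I assume the rescalings $g_k(\zeta):=\rho_k^{-\alpha}f_k(z_k+\rho_k\zeta)$ converge spherically locally uniformly on $\mathbb{C}$ to a nonconstant meromorphic $g$ and derive a contradiction from assuming $\mathscr{F}$ is normal at $z_0$. By Marty, a subsequence of $\{f_k\}$ then converges spherically uniformly on some disc around $z_0$ to a meromorphic function $\varphi$ or to $\infty$. I split into cases by the value of $\varphi(z_0)$: if $\varphi\equiv\infty$ or $\varphi(z_0)\in\mathbb{C}\setminus\{0\}$, a direct computation using $z_k\to z_0$ and $\rho_k\to 0$ forces $g\equiv\infty$ or $g\equiv\varphi(z_0)$, contradicting nonconstancy; if $\varphi(z_0)=0$ (and $\varphi\not\equiv 0$), then Hurwitz's theorem combined with the multiplicity-$k$ hypothesis shows that $\varphi$ has a zero of order at least $k$ at $z_0$, so $|f_k(w)|=O(|w-z_0|^k)$ uniformly near $z_0$; scaling by $\rho_k^{-\alpha}$ and invoking $\alpha<k$ forces $g\equiv 0$, again a contradiction.

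For the necessary direction ($\Rightarrow$), this is the classical Zalcman--Pang rescaling argument. Translating so that $z_0=0$, fix a small disc $\overline{D(0,r)}\subset\Delta$. Non-normality of $\mathscr{F}$ combined with Marty yields $f_n\in\mathscr{F}$ and $w_n\to 0$ with $f_n^\#(w_n)\to\infty$. I form the continuous weighted spherical derivative $F_n(z):=(r^2-|z|^2)^\alpha f_n^\#(z)$, which vanishes on the boundary of $D(0,r)$; let its maximum $M_n$ be attained at an interior point $z_n^*$, and note $M_n\geq F_n(w_n)\to\infty$. Choose $\rho_n\to 0$ so that the rescaled function $g_n(\zeta):=\rho_n^{-\alpha}f_n(z_n^*+\rho_n\zeta)$ satisfies $g_n^\#(0)=1$; in the classical range $\alpha<1$ one takes $\rho_n=f_n^\#(z_n^*)^{-1/(1-\alpha)}$, with a suitable modification otherwise that uses the multiplicity hypothesis to keep zeros of $f_n$ under control. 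The extremality of $z_n^*$ then gives, for $\zeta$ in any compact $K\subset\mathbb{C}$ and $n$ large,
\[
g_n^\#(\zeta)=\rho_n^{1-\alpha}f_n^\#(z_n^*+\rho_n\zeta)\leq\rho_n^{1-\alpha}\cdot\frac{M_n}{(r^2-|z_n^*+\rho_n\zeta|^2)^\alpha}\longrightarrow 1,
\]
since $|z_n^*+\rho_n\zeta|^2-|z_n^*|^2=O(\rho_n)$ uniformly on $K$. Thus $\{g_n^\#\}$ is locally bounded on $\mathbb{C}$, and Marty's criterion makes $\{g_n\}$ a normal family on $\mathbb{C}$; passing to a subsequence, $g_n\to g$ spherically uniformly on compacts. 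Because $g^\#(0)=\lim g_n^\#(0)=1\neq 0$, the limit $g$ is neither a constant nor identically $\infty$, hence a nonconstant meromorphic function on $\mathbb{C}$. Setting $z_k:=z_n^*$ and $\rho_k:=\rho_n$ completes the proof.

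The main obstacle is in the necessity direction, where the weight $F_n$ and scaling $\rho_n$ must be calibrated so that three requirements hold simultaneously: (i) $\rho_n\to 0$; (ii) $g_n^\#(0)$ is bounded away from $0$; and (iii) $\{g_n^\#\}$ is locally bounded on all of $\mathbb{C}$. The maximality of $z_n^*$ for the weighted derivative $(r^2-|z|^2)^\alpha f_n^\#$ is precisely what engineers (iii), via the elementary comparison $(r^2-|z_n^*+\rho_n\zeta|^2)\sim(r^2-|z_n^*|^2)$ on compacts. The multiplicity hypothesis on zeros of $f\in\mathscr{F}$ together with the bound $\alpha<k$ is essential in the sufficiency direction to exclude the degenerate possibility $g\equiv 0$ when $\varphi(z_0)=0$; in refined versions of the Pang--Zalcman construction these hypotheses also extend the necessity argument beyond the classical range $\alpha<1$.
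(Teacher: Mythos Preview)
The paper does not prove this lemma at all; it is quoted verbatim from Schiff's book \cite{sc} (this is the Zalcman--Pang rescaling lemma), so there is no ``paper's own proof'' to compare against. Your write-up is an attempt to reconstruct that classical argument.

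There is, however, a genuine gap in your necessity direction. The displayed identity
\[
g_n^\#(\zeta)=\rho_n^{\,1-\alpha}f_n^\#(z_n^*+\rho_n\zeta)
\]
is \emph{false} whenever $\alpha>0$. With $g_n(\zeta)=\rho_n^{-\alpha}f_n(z_n^*+\rho_n\zeta)$ one has $g_n'(\zeta)=\rho_n^{\,1-\alpha}f_n'(z_n^*+\rho_n\zeta)$, but
\[
g_n^\#(\zeta)=\frac{\rho_n^{\,1-\alpha}|f_n'(z_n^*+\rho_n\zeta)|}{1+\rho_n^{-2\alpha}|f_n(z_n^*+\rho_n\zeta)|^2},
\]
which differs from $\rho_n^{\,1-\alpha}f_n^\#$ by the factor $\rho_n^{-2\alpha}$ in the denominator. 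Consequently your choice $\rho_n=f_n^\#(z_n^*)^{-1/(1-\alpha)}$ does not give $g_n^\#(0)=1$, the extremality comparison you wrote down does not bound $g_n^\#$, and the whole Marty step collapses. This is precisely why Pang's extension of Zalcman's lemma to the range $0\le\alpha<k$ is nontrivial: the spherical derivative is not homogeneous under the scaling $f\mapsto\rho^{-\alpha}f$, and one must instead maximize a different functional (in Pang's argument a quantity built from $|f'|/(1+|f|)^{2}$ together with the zero-multiplicity hypothesis, or in later refinements a two-parameter rescaling) to manufacture the required local bounds. Your parenthetical ``with a suitable modification otherwise that uses the multiplicity hypothesis'' is exactly the substantive content of the lemma and cannot be waved away.

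Your sufficiency direction is closer to correct, but the step ``$|f_k(w)|=O(|w-z_0|^k)$ uniformly near $z_0$'' in the case $\varphi(z_0)=0$ also needs justification: uniform convergence $f_n\to\varphi$ does not by itself give a uniform-in-$n$ vanishing rate for $f_n$ at the (moving) points $z_n+\rho_n\zeta$; one has to invoke Hurwitz to locate the nearby zeros of $f_n$, factor them out with their multiplicities $\ge k$, and only then compare with $\rho_n^{-\alpha}$ using $\alpha<k$.
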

\begin{proof}[\textbf{Proof of the Theorem \ref{thn}}] Since normality is a local property, we may assume that $D=\Delta$. If possible, suppose that $\mathscr{F}$ is not normal on $\Delta$, then by Lemma \ref{holi}, there exist $\{f_n\}\subset\mathscr{F}$, $z_n\in \Delta$ and positive numbers $\rho_k$ with $\rho_k\rightarrow0$ such that
$$g_n(\zeta)=\rho_{n}^{-\alpha}f_{n}(z_n+\rho_{n}\zeta)\rightarrow g(\zeta)$$
locally, uniformly in spherical metric. Let
$$H_n(\zeta)=(g_{n}(\zeta))^{q_{0}}(g_{n}^{'}(\zeta))^{q_{1}}\ldots(g_{n}^{(k)}(\zeta))^{q_{k}}.$$
and,
$$H(\zeta)=(g(\zeta))^{q_{0}}(g^{'}(\zeta))^{q_{1}}\ldots(g^{(k)}(\zeta))^{q_{k}}.$$
Then \beas H_n(\zeta)&=&\rho_{n}^{\mu_{\ast}-\alpha\mu}(f_{n}(z_n+\rho_{n}\zeta))^{q_{0}}(f_{n}^{'}(z_n+\rho_{n}\zeta))^{q_{1}}\ldots(f_{n}^{(k)}(z_n+\rho_{n}\zeta))^{q_{k}}\\
&=& (f_{n}(z_n+\rho_{n}\zeta))^{q_{0}}(f_{n}^{'}(z_n+\rho_{n}\zeta))^{q_{1}}\ldots(f_{n}^{(k)}(z_n+\rho_{n}\zeta))^{q_{k}}~~\text{if~~we~~ choose}~~\mu_{\ast}-\alpha\mu=0\\
&\rightarrow& H(\zeta)~~\text{locally,~~ uniformly ~~in~~ spherical~~ metric.}
\eeas
Now, we  made the following observations:
\begin{enumerate}
  \item [a.] by Lemma \ref{holi}, $g(\zeta)$ is non-constant meromorphic function.
  \item [b.] by Hurwitz's Theorem (pp. 152, \cite{con}), all zeros of $g(\zeta)$ are of multiplicity atleast $k$.
  \item [c.] $H(\zeta)\not\equiv0$, otherwise, $g(\zeta)$ will become a polynomial of degree atmost $k-1$, which is impossible by (b).
  \item [d.] by Hurwitz's Theorem, $H(\zeta)\not=1$, as $H_{n}(\zeta)\not=1$.
  \item [e.] by Theorem \ref{th2} and (d), $g(\zeta)$ must be non-trancendental, i.e., non-constant rational function.
  \item [f.] Since $\mathscr{F}$ is a family of analytic functions, so $g_n(\zeta)$ is analytic. Since, $g_n(\zeta)\rightarrow g(\zeta)$
locally, uniformly in spherical metric and $g(\zeta)$ is non-constant, hence, $g(\zeta)$ is analytic.
\end{enumerate}
Thus using (e) and (f), we can conclude that $g(\zeta)$ is a non-constant polynomial function, say, $g(\zeta)=c_0+c_1\zeta+\ldots+c_l\zeta^{l}$. But by (b), $l$ must be atleast $k$. Thus $H(\zeta)$ is a non-constant polynomial. So, by the Fundamental Theorem of Algebra, $H(\zeta)=1$ has a solution, which contradicts (d). Thus our assumption is wrong. So $\mathscr{F}$ is normal. This completes the proof.
\end{proof}
Proceeding as above and using the result of Karmakar and Sahoo (Theorem 1.1 of \cite{KS}), the following result is obvious.
\begin{cor}
Let $\mathscr{F}$ be a family of analytic functions on a domain $D$ and let $k(\geq1)$ and $n(\geq 2)$ be two positive integers. If for each $f\in \mathscr{F}$,
\begin{enumerate}
  \item [i.] $f$ has only zeros of multiplicity at least $k$
  \item [ii.] $f^{n}f^{(k)}\not=1$,
\end{enumerate}
then $\mathscr{F}$ is normal on domain $D$.
\end{cor}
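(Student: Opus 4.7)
The plan is to assume $\mathscr{F}$ is not normal at some point of $D$ (which we localize to the unit disc $\Delta$) and to derive a contradiction using Theorem \ref{th2}. I would apply the Pang--Zalcman rescaling lemma (Lemma \ref{holi}) with the scaling exponent $\alpha := \mu_{\ast}/\mu$. This $\alpha$ lies in $[0,k)$ because $\mu_{\ast}=\sum_{j=1}^{k}jq_{j}\leq k\sum_{j=1}^{k}q_{j}=k(\mu-q_{0})<k\mu$, since $q_{0}\geq 1$. Hence there exist $f_{n}\in\mathscr{F}$, $z_{n}\to z_{0}\in\Delta$, $\rho_{n}\to 0^{+}$ such that
\[
g_{n}(\zeta):=\rho_{n}^{-\alpha}f_{n}(z_{n}+\rho_{n}\zeta)\longrightarrow g(\zeta)
\]
locally uniformly in the spherical metric, where $g$ is a nonconstant meromorphic function on $\mathbb{C}$.

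Next, I would differentiate the rescaling to obtain $g_{n}^{(j)}(\zeta)=\rho_{n}^{j-\alpha}f_{n}^{(j)}(z_{n}+\rho_{n}\zeta)$, so that the associated monomial
\[
H_{n}(\zeta):=g_{n}^{q_{0}}(g_{n}')^{q_{1}}\cdots(g_{n}^{(k)})^{q_{k}}=\rho_{n}^{\mu_{\ast}-\alpha\mu}\,f_{n}^{q_{0}}(f_{n}')^{q_{1}}\cdots(f_{n}^{(k)})^{q_{k}}(z_{n}+\rho_{n}\zeta).
\]
The choice $\alpha=\mu_{\ast}/\mu$ exactly kills the power of $\rho_{n}$, so $H_{n}\to H:=g^{q_{0}}(g')^{q_{1}}\cdots(g^{(k)})^{q_{k}}$ locally uniformly. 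From hypothesis (i) and Hurwitz's theorem, every zero of $g$ has multiplicity at least $k$; from hypothesis (ii) and Hurwitz, the limit $H$ is either identically $1$ or never equals $1$.

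Now I would eliminate the various possibilities for $g$. If $H\equiv 0$, then $g$ would be a polynomial of degree strictly less than $k$, contradicting the multiplicity-$k$ condition on its zeros together with nonconstancy. Since $\mathscr{F}$ consists of analytic functions, $g$ is entire by Hurwitz. If $g$ were transcendental, Theorem \ref{th2} applied with $\varphi\equiv 1$ would give
\[
T(r,g)\leq\frac{1}{q_{0}-2}N\!\left(r,\frac{1}{H-1}\right)+S(r,g),
\]
but $H\neq 1$ everywhere forces the right-hand side to be $S(r,g)$, an absurdity. Hence $g$ must be a nonconstant polynomial, and its degree $l$ satisfies $l\geq k$. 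Then $H$ is itself a nonconstant polynomial, so by the Fundamental Theorem of Algebra the equation $H(\zeta)=1$ has a root, contradicting $H\neq 1$ everywhere. This contradiction shows $\mathscr{F}$ is normal.

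The main obstacle is identifying the correct scaling exponent $\alpha=\mu_{\ast}/\mu$ and checking it lies in the admissible range $[0,k)$ required by Lemma \ref{holi}, so that the rescaled monomial $H_{n}$ converges cleanly to $M[g]$ without a residual factor of $\rho_{n}$. Once this invariance is set up, exploiting Theorem \ref{th2} to rule out transcendental $g$ is the decisive new input; the remaining reductions via Hurwitz's theorem and the Fundamental Theorem of Algebra are standard features of the Pang--Zalcman method.
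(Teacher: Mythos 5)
Your overall strategy is exactly the one the paper uses for Theorem \ref{thn} and then reuses here: Zalcman--Pang rescaling with $\alpha=\mu_{\ast}/\mu$ (here $\alpha=k/(n+1)$), Hurwitz's theorem to transfer the zero-multiplicity and omitted-value conditions to the limit $g$, a value-distribution estimate to exclude a transcendental limit, and the Fundamental Theorem of Algebra to finish. But there is a genuine gap at the decisive step: you invoke Theorem \ref{th2} with $\varphi\equiv 1$, and Theorem \ref{th2} requires $q_{0}\geq 3$. For the monomial $f^{n}f^{(k)}$ one has $q_{0}=n$, and the corollary explicitly allows $n=2$. So for $n=2$ your appeal to Theorem \ref{th2} is not licensed, and that is precisely the only case in which the corollary is not already contained in Theorem \ref{thn} (for $n\geq 3$ it is the special case $q_{1}=\dots=q_{k-1}=0$, $q_{k}=1$). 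The paper closes this gap by substituting, in place of Theorem \ref{th2}, the Karmakar--Sahoo estimate for $f^{n}f^{(k)}-1$ with $n\geq 2$ (Theorem 1.1 of \cite{KS}), which yields $T(r,g)\leq c\,N\bigl(r,\tfrac{1}{g^{n}g^{(k)}-1}\bigr)+S(r,g)$ and hence rules out a transcendental limit even when $n=2$. Your argument becomes correct once you make the same substitution.

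A secondary remark: Hurwitz's theorem only gives that $H$ is either never equal to $1$ or identically equal to $1$, and you note this dichotomy but never dispose of the branch $H\equiv 1$. The paper's own proof of Theorem \ref{thn} is equally silent on this point, so it is not a defect specific to your write-up, but a complete argument should address it (e.g., $H\equiv 1$ forces the entire nonconstant limit $g$ to be zero-free, and one must then derive a contradiction separately).
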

\section*{Acknowledgement}
We are thankful to Prof. C. C. Yang for giving us some ardent help and suggestion in the time of the preparation of this manuscript.
%%%%%%%%%%%%%%%%%%%%%%%%%%%%%%%%%%%%%%%%%%%%%%%%%%%%%%%%%%%%%%%%%%%%%%%%%%%%%%%%%%%%%%%%%%%%%%%%%%%%%%%%%%%%%%%%%%%%%%%%%%%%%%%%%%%

\end{document}